\begin{document}

\def\theequation{
\arabic{equation}}

\newcommand{\bfi}{\bfseries\itshape}

\newtheorem{theorem}{Theorem}
\newtheorem{acknowledgment}[theorem]{Acknowledgment}
\newtheorem{corollary}[theorem]{Corollary}
\newtheorem{definition}[theorem]{Definition}
\newtheorem{example}[theorem]{Example}
\newtheorem{lemma}[theorem]{Lemma}
\newtheorem{notation}[theorem]{Notation}
\newtheorem{proposition}[theorem]{Proposition}
\newtheorem{remark}[theorem]{Remark}
\newtheorem{setting}[theorem]{Setting}
\newtheorem{hypothesis}[theorem]{Hypothesis}

\numberwithin{theorem}{section}
\numberwithin{equation}{section}

\renewcommand{\1}{{\bf 1}}
\newcommand{\Ad}{{\rm Ad}}
\newcommand{\Alg}{{\rm Alg}\,}
\newcommand{\Aut}{{\rm Aut}\,}
\newcommand{\ad}{{\rm ad}}
\newcommand{\Borel}{{\rm Borel}}
\newcommand{\botimes}{\bar{\otimes}}
\newcommand{\Ci}{{\mathcal C}^\infty}
\newcommand{\Cint}{{\mathcal C}^\infty_{\rm int}}
\newcommand{\Cpol}{{\mathcal C}^\infty_{\rm pol}}
\newcommand{\Der}{{\rm Der}\,}
\newcommand{\Diff}{{\rm Diff}\,}
\newcommand{\de}{{\rm d}}
\newcommand{\ee}{{\rm e}}
\newcommand{\End}{{\rm End}\,}
\newcommand{\ev}{{\rm ev}}
\newcommand{\hotimes}{\widehat{\otimes}}
\newcommand{\id}{{\rm id}}
\newcommand{\ie}{{\rm i}}
\newcommand{\iotaR}{\iota^{\rm R}}
\newcommand{\GL}{{\rm GL}}
\newcommand{\gl}{{{\mathfrak g}{\mathfrak l}}}
\newcommand{\Hom}{{\rm Hom}\,}
\newcommand{\Img}{{\rm Im}\,}
\newcommand{\Ind}{{\rm Ind}}
\newcommand{\Ker}{{\rm Ker}\,}
\newcommand{\Lie}{\text{\bf L}}
\newcommand{\Mt}{{{\mathcal M}_{\text t}}}
\newcommand{\m}{\text{\bf m}}
\newcommand{\pr}{{\rm pr}}
\newcommand{\Ran}{{\rm Ran}\,}
\renewcommand{\Re}{{\rm Re}\,}
\newcommand{\so}{\text{so}}
\newcommand{\spa}{{\rm span}\,}
\newcommand{\Tr}{{\rm Tr}\,}
\newcommand{\tw}{\ast_{\rm tw}}
\newcommand{\Op}{{\rm Op}}
\newcommand{\U}{{\rm U}}
\newcommand{\UCb}{{{\mathcal U}{\mathcal C}_b}}
\newcommand{\weak}{\text{weak}}

\newcommand{\CC}{{\mathbb C}}
\newcommand{\RR}{{\mathbb R}}
\newcommand{\TT}{{\mathbb T}}
\newcommand{\NN}{{\mathbb N}}

\newcommand{\Ac}{{\mathcal A}}
\newcommand{\Bc}{{\mathcal B}}
\newcommand{\Cc}{{\mathcal C}}
\newcommand{\Dc}{{\mathcal D}}
\newcommand{\Ec}{{\mathcal E}}
\newcommand{\Fc}{{\mathcal F}}
\newcommand{\Hc}{{\mathcal H}}
\newcommand{\Jc}{{\mathcal J}}
\newcommand{\Lc}{{\mathcal L}}
\renewcommand{\Mc}{{\mathcal M}}
\newcommand{\Nc}{{\mathcal N}}
\newcommand{\Oc}{{\mathcal O}}
\newcommand{\Pc}{{\mathcal P}}
\newcommand{\Qc}{{\mathcal Q}}
\newcommand{\Sc}{{\mathcal S}}
\newcommand{\Tc}{{\mathcal T}}
\newcommand{\Vc}{{\mathcal V}}
\newcommand{\Uc}{{\mathcal U}}
\newcommand{\Xc}{{\mathcal X}}
\newcommand{\Yc}{{\mathcal Y}}
\newcommand{\Wig}{{\mathcal W}}

\newcommand{\Bg}{{\mathfrak B}}
\newcommand{\Fg}{{\mathfrak F}}
\newcommand{\Gg}{{\mathfrak G}}
\newcommand{\Ig}{{\mathfrak I}}
\newcommand{\Jg}{{\mathfrak J}}
\newcommand{\Lg}{{\mathfrak L}}
\newcommand{\Pg}{{\mathfrak P}}
\newcommand{\Sg}{{\mathfrak S}}
\newcommand{\Xg}{{\mathfrak X}}
\newcommand{\Yg}{{\mathfrak Y}}
\newcommand{\Zg}{{\mathfrak Z}}

\newcommand{\ag}{{\mathfrak a}}
\newcommand{\bg}{{\mathfrak b}}
\newcommand{\dg}{{\mathfrak d}}
\renewcommand{\gg}{{\mathfrak g}}
\newcommand{\hg}{{\mathfrak h}}
\newcommand{\kg}{{\mathfrak k}}
\newcommand{\mg}{{\mathfrak m}}
\newcommand{\n}{{\mathfrak n}}
\newcommand{\og}{{\mathfrak o}}
\newcommand{\pg}{{\mathfrak p}}
\newcommand{\sg}{{\mathfrak s}}
\newcommand{\tg}{{\mathfrak t}}
\newcommand{\ug}{{\mathfrak u}}
\newcommand{\zg}{{\mathfrak z}}

\newcommand{\ZZ}{\mathbb Z}
\newcommand{\BB}{\mathbb B}
\newcommand{\HH}{\mathbb H}

\newcommand{\ep}{\varepsilon}

\newcommand{\hake}[1]{\langle #1 \rangle }

\newcommand{\scalar}[2]{(#1 \mid#2) }
\newcommand{\dual}[2]{\langle #1, #2\rangle}

\newcommand{\norm}[1]{\Vert #1 \Vert }

\makeatletter
\title[Boundedness for Weyl-Pedersen calculus]{Boundedness for Weyl-Pedersen calculus\\ on flat coadjoint orbits}
\author{Ingrid Belti\c t\u a 
 and Daniel Belti\c t\u a
}
\address{Institute of Mathematics "Simion Stoilow" 
of the Romanian Academy, Research Unit~1,  
P.O. Box 1-764, Bucharest, Romania}
\email{Ingrid.Beltita@imar.ro, ingrid.beltita@gmail.com}
\email{Daniel.Beltita@imar.ro, beltita@gmail.com}
\keywords{Weyl calculus; Lie group; Calder\'on-Vaillancourt theorem}
\subjclass[2000]{Primary 47G30; Secondary 22E25, 47B10}
\date{\today}
\makeatother

\begin{abstract}
We describe boundedness and compactness properties for the operators obtained by 
the Weyl-Pedersen calculus
in the case of the irreducible unitary representations of nilpotent Lie groups 
that are associated with flat coadjoint orbits. 
We use spaces of smooth symbols satisfying appropriate growth conditions 
expressed in terms of invariant differential operators on the coadjoint orbit 
under consideration. 
Our method also provides conditions for these operators to 
belong to one of the Schatten ideals of compact operators. 
In the special case of the Schr\"odinger representation of the Heisenberg group we recover 
some classical properties of the pseudo-differential Weyl calculus, 
as the Calder\'on-Vaillancourt theorem, and the Beals characterization in terms of commutators.
\end{abstract}

\maketitle

\section{Introduction}\label{introduction}

We aim for describing boundedness properties for the operators obtained by 
the Weyl-Pedersen calculus (\cite{Pe94}) 
in the case of the irreducible unitary representations of nilpotent Lie groups 
that are associated with flat coadjoint orbits. 
To this end we use spaces of smooth symbols satisfying appropriate growth conditions 
expressed in terms of invariant differential operators on the coadjoint orbit 
under consideration. 
In turn, these spaces of symbols are invariant under the coadjoint action. 
Our method is inspired by \cite{Ka76} and also provides conditions for the aforementioned operators to be compact or to 
belong to one of the Schatten ideals of compact operators. 
In the special case of the Schr\"odinger representation of the Heisenberg group the invariant differential operators are precisely the linear partial differential operators with constant coefficients. 
Thus we recover some classical properties of the pseudo-differential Weyl calculus, 
which go back to \cite{CV72}, \cite{Be77}, and \cite{Ro84}. 

The problem of finding sufficient conditions 
for the boundedness of pseudo-differential operators associated with unitary representations
 of various types of nilpotent Lie groups received much attention: for smooth symbols of convolution operators 
(\cite{Ho84}), on 2-step nilpotent groups (\cite{Mi82}), on 3-step nilpotent groups (\cite{Ra85}), 
on graded groups (\cite{Me83}, 
\cite{Gl07}), 
and for non-smooth symbols 
(\cite{BB10c}, \cite{BB12}). 
There is also a vast literature on the boundedness of singular integral operators 
on nilpotent Lie groups 
(see for instance \cite{Mu83}, \cite{Mu84}). 
In contrast to these investigations, we will work below with groups whose generic coadjoint orbits are flat.  
These groups are not necessarily graded (as the examples of \cite{Bu06} show), 
and on the other hand every nilpotent Lie group embeds into a group of this type 
as a closed subgroup (see \cite[Th. 2.1]{Co83}), 
hence there exist groups of arbitrarily high nilpotency step to which our results apply. 
Moreover, the growth conditions that we use are different from the ones already used in the literature, 
inasmuch as we use invariant differential operators on coadjoint orbits. 
We also give a Beals-type characterisation of this space of symbols. 
 
\subsection*{Statement of the main results}
To describe the contents of our paper in more detail, 
let $G$ be a connected, simply connected, nilpotent Lie group 
with the Lie algebra $\gg$, whose center is denoted by~$\zg$. 
Let $\pi\colon G\to\BB(\Hc)$ be a unitary irreducible representation 
associated with the coadjoint orbit $\Oc\subseteq\gg^*$. 
We define $\Diff(\Oc)$ as the space of all linear differential operators $D$ on $\Oc$ 
which are \emph{invariant} to the coadjoint action, in the sense that   
 $$
(\forall g\in G)(\forall a\in C^\infty(\Oc))\quad  D(a\circ\Ad_G^*(g)\vert_{\Oc})=(Da)\circ\Ad_G^*(g)\vert_{\Oc}.$$
We will henceforth assume that $\Oc$ is a \emph{generic flat coadjoint orbit}. 
This is equivalent to the condition $\dim\Oc=\dim\gg-\dim\zg$, and it is also equivalent to the fact that the representation $\pi$ is square integrable modulo the center of $G$. (See \cite{CG90}.) 
Then the Weyl-Pedersen calculus $\Op\colon\Sc'(\Oc)\to\Lc(\Hc_\infty,\Hc_{-\infty})$ 
is a linear topological isomorphism which is uniquely determined by the condition 
that for every $b\in\Sc(\gg)$ we have  
$$\Op(\check{b}\vert_{\Oc})=\int\limits_{\gg}\pi(\exp_G X) b(X)\de X,$$ 
where $\check{b}(\xi)=\int_{\gg}\ee^{\ie\langle\xi,Y\rangle}b(Y)\de Y$ for all $\xi\in\gg^*$ 
and $\langle\cdot,\cdot\rangle\colon\gg^*\times\gg\to\RR$ stands for the duality pairing 
(see \cite[Th. 4.2.1]{Pe94}, \cite{Ma07}). 
We have also used above the notation $\Hc_\infty$ for the nuclear Fr\'echet space of smooth vectors of $\pi$,  
$\Hc_{-\infty}$ for the space of antilinear continuous functionals thereon, 
$\Lc(\Hc_\infty,\Hc_{-\infty})$ for the space of continuous linear operators between the above space 
(these operators are thought of as possibly unbounded linear operators in $\Hc$), 
and $\Sc(\bullet)$ and $\Sc'(\bullet)$ for the spaces of Schwartz functions and tempered distributions, respectively. 

Let us consider the space of symbols
\begin{equation}\label{symbols}
\Cc^\infty_b(\Oc) =\{a\in C^\infty(\Oc)\mid  Da\in L^\infty(\Oc)\;\text{for all} \; D\in\Diff(\Oc)\},
\end{equation} 
with the Fr\'echet topology given by the seminorms $\{a\mapsto\Vert Da\Vert_{L^\infty(\Oc)}\}_{D\in\Diff(\Oc)}$.
The main results of the present paper can be then summarized in the next theorem.

\begin{theorem}\label{main_introd}
Let $G$ be a connected, simply connected, nilpotent Lie group whose generic coadjoint orbits are flat. 
Let $\Oc$ be such an orbit with a corresponding unitary irreducible representation $\pi\colon G\to\BB(\Hc)$.
Then the following assertions are equivalent for $a\in C^\infty(\Oc)$:
\begin{enumerate}
\item  $a\in \Cc^\infty_b(\Oc)$,
\item For every $D\in\Diff(\Oc)$ we have $\Op(Da)\in\BB(\Hc)$. 
\end{enumerate}
Moreover the Weyl-Pedersen calculus defines a continuous linear map 
$$\Op\colon \Cc^{\infty}_b(\Oc)\to\BB(\Hc),$$
and the Fr\'echet topology of $C_b^\infty(\Oc)$ is equivalent to that defined by the family of seminorms 
$\{a\mapsto\norm{\Op(Da)}\}_{D\in\Diff(\Oc)}$. 
\end{theorem}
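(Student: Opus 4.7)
The plan is to split the equivalence into a Calder\'on-Vaillancourt type half ((1)$\Rightarrow$(2) together with the inequality $\norm{\Op(Da)}\lesssim\sup_{D'}\norm{D'Da}_{L^\infty(\Oc)}$) and a Beals-type converse ((2)$\Rightarrow$(1) together with the reverse inequality), exploiting the affine structure of the orbit throughout. Since $\Oc$ is flat we may write $\Oc=\xi_0+V$ for some vector subspace $V\subseteq\gg^*$ of dimension $\dim\gg-\dim\zg$; the coadjoint action on $\Oc$ factors through $G/Z(G)$ as a transitive group of translations, $\Diff(\Oc)$ is exactly the algebra of constant-coefficient differential operators on $V$, and $\Cc^\infty_b(\Oc)$ is identified with the classical space $C^\infty_b(V)$ with its usual Fr\'echet topology.

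For the Calder\'on-Vaillancourt direction the target is the estimate
$$\norm{\Op(a)}\le C\sup_{D\in F}\norm{Da}_{L^\infty(\Oc)}$$
for some finite $F\subseteq\Diff(\Oc)$ and constant $C>0$ depending only on $\Oc$. I would start from the oscillatory integral expression in \cite[Th.~4.2.1]{Pe94} written in Malcev coordinates adapted to a Pedersen polarization, apply a smooth partition of unity on $V$ at unit scale, integrate by parts on each localized piece using the constant-coefficient derivatives of $a$ to produce rapidly decreasing kernel estimates, and sum the local contributions by a Cotlar-Stein almost-orthogonality argument in the style of \cite{Ka76}. Since $\Diff(\Oc)$ preserves $\Cc^\infty_b(\Oc)$, applying the same estimate to $Da$ yields both the membership $\Op(Da)\in\BB(\Hc)$ and the continuity of $\Op\colon\Cc^\infty_b(\Oc)\to\BB(\Hc)$, hence domination of the Beals seminorms by the original ones.

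For the converse, the key tool is the covariance identity $\pi(g)\Op(a)\pi(g)^{-1}=\Op(a\circ\Ad_G^*(g^{-1})\vert_{\Oc})$. For a fixed smooth unit vector $\phi\in\Hc_\infty$, transferring the matrix coefficient $g\mapsto(\Op(a)\pi(g)\phi\mid\pi(g)\phi)$ to $\Oc$ yields a reconstruction identity $a*W_\phi =$ a bounded function of sup norm at most $\norm{\Op(a)}\cdot\norm{\phi}^2$, where $W_\phi\in\Sc(V)$ is a Wigner-type Schwartz kernel that can be arranged to satisfy $\widehat{W_\phi}(0)=1$. A Hadamard-type factorization should then give an identity
$$\delta=W_\phi+\sum_{j=1}^{m}D_j\psi_j$$
with $D_j\in\Diff(\Oc)$ and $\psi_j\in\Sc(V)$, whence $a=a*W_\phi+\sum_j(D_ja)*\psi_j$. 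Since each $\psi_j$ can in turn be realised as a finite linear combination of matrix coefficients of $\pi$, the Schwartz convolutions $(D_ja)*\psi_j$ admit $L^\infty$ bounds in terms of $\norm{\Op(D_ja)}$, and one obtains $\norm{a}_{L^\infty}\le C\sup_{D'\in F'}\norm{\Op(D'a)}$ for some finite $F'\subseteq\Diff(\Oc)$. Applying this estimate with $Da$ in place of $a$ for arbitrary $D\in\Diff(\Oc)$ gives $Da\in L^\infty(\Oc)$ for every $D$, which is assertion (1), together with the reverse domination of the two seminorm families.

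The main technical obstacle I anticipate is the Hadamard-type decomposition of $\widehat{W_\phi}-1$ into a finite sum $\sum_j\zeta_j\widehat{\psi_j}(\zeta)$ with $\psi_j$ globally Schwartz on $V$ and, moreover, representable as matrix coefficients of $\pi$ (so that the reconstruction bound applies). Near the origin the factorisation is immediate from the Hadamard lemma, and away from the origin $\widehat{W_\phi}-1$ is already Schwartz and divisible by any polynomial; patching the two regimes together while keeping the Wigner-kernel representation is where flatness of $\Oc$ plays an indispensable role, since it turns $V$ into a genuine linear space on which the usual Schwartz-space Fourier calculus applies directly. Once this decomposition is in hand, the equivalence of the two Fr\'echet topologies follows from the two-sided seminorm inequalities obtained above.
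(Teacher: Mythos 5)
Your overall architecture (a Calder\'on--Vaillancourt half via a reproducing decomposition of $\delta$ together with a Kato-style averaging, plus a Beals half via covariance and the same decomposition applied to $Da$) matches the paper's strategy. But the proposal rests on a structural claim that is false in general and that undermines both halves: you assert that, because $\Oc=\xi_0+V$ is flat, the coadjoint action on $\Oc$ is a transitive group of translations of $V$, hence that $\Diff(\Oc)$ is the algebra of constant-coefficient operators and $\Cc^\infty_b(\Oc)$ is the classical $C^\infty_b(V)$. Flatness is a statement about the orbit as a subset of $\gg^*$, not about the action on it. In the chart $P(\xi)=\xi\vert_{\gg_0}$ the action is the \emph{affine} map $\gamma(X)\eta=\Ad_{G_0}^*(X)\eta+\chi(X)$ (Proposition~\ref{aff_prop}), whose linear part is trivial only when $\gg_0=\gg/\zg$ is abelian, i.e.\ essentially only in the Heisenberg case. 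For the $5$-dimensional examples of Section~\ref{computations} the invariant first-order operators include $\partial_1+\eta_1\partial_3-\eta_2\partial_4$, which has non-constant coefficients; consequently $\Cc^\infty_b(\Oc)$ is genuinely larger than $C^\infty_b(V)$ (boundedness of $\partial_2a$, $\partial_3a$, $\partial_4a$ and of $(\partial_1+\eta_1\partial_3-\eta_2\partial_4)a$ does not yield boundedness of $\partial_1a$). With that identification gone, your unit-scale partition of unity with integration by parts in ``constant-coefficient derivatives of $a$'', the Cotlar--Stein summation, and the Euclidean Fourier/Hadamard factorization $\delta=W_\phi+\sum_jD_j\psi_j$ on $V$ all control the wrong seminorms and do not prove the stated equivalence.

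The paper replaces each of these Euclidean tools by its noncommutative counterpart on the group $G_0=(\gg_0,\cdot)$. The reproducing identity is $\delta=\sum_j\tilde u_j\ast F_j$ with $\tilde u_j\in\U((\gg_0)_{\CC})$ and $F_j\in C_0^m(\gg_0)$, the convolution being taken with respect to the Baker--Campbell--Hausdorff product (Lemma~\ref{decomp-algebra}, from~\cite{DDJP09}); transported to the orbit it gives $a=\sum_jA_{\xi_0}^*(\de\beta(u_j)a)\ast f_j$, where the $\de\beta(u_j)$ are precisely the invariant operators, which is what makes the invariant seminorms appear. Boundedness then follows from Kato's averaging $b\{C\}=\int b(X)\pi(X)^{-1}C\pi(X)\,\de X$ applied with $C=\Op(f_j)$ trace class (Proposition~\ref{Kato}, Lemma~\ref{oplemma}), with no almost-orthogonality argument needed; the converse follows from the same decomposition combined with the invariance of the norm $\norm{\Op(\cdot)}$ under the covariance action (Lemma~\ref{Bony}). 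If you specialize to the Schr\"odinger representation your argument is essentially the classical one and is fine, but to prove the theorem as stated you must work with group convolution on $\gg_0$ and with $\U((\gg_0)_{\CC})$ in place of constant-coefficient operators throughout.
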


Let $\Cc^\infty_\infty(\Oc)$ be the space of all $a\in \Cc^\infty(\Oc)$ such that 
the function $Da$ vanishes at infinity on $\Oc$, for every $D\in \Diff(\Oc)$. 
Then $\Cc^\infty_\infty(\Oc)$ is the closure of $\Sc(\Oc)$ in $\Cc_b^\infty(\Oc)$. 
Since on the other hand the set $\Op(\Sc(\Oc))$ is dense in the space $\Sg_\infty(\Hc)$ 
of compact operators on $\Hc$ (see for instance \cite[Cor. 3.3 and Thm. 4.1]{BB10b}), it then follows by
 the above theorem that 
 $$ \Cc_{\infty}^{\infty}(\Oc) = \{ a \in  \Cc^\infty(\Oc)\mid  \Op(Da)\in \Sg_\infty(\Hc) \;\text{for all} \; D\in\Diff(\Oc)\}.$$
 See Theorem~\ref{main_Schatten} below for a similar statement involving Schatten ideals of compact operators.

If $\pi$ is the Schr\"odinger representation of the $(2n+1)$-dimensional 
Heisenberg group, then Theorem~\ref{main_introd} 
gives the characterization of the symbols of type $S^{0}_{0,0}$ 
for the pseudo-differential Weyl calculus 
$\Op\colon\Sc'(\RR^{2n})\to\Lc(\Sc(\RR^n),\Sc'(\RR^n))$.  
Namely,  for any symbol $a\in C^\infty(\RR^{2n})$ we have  
$$(\forall\alpha\in\NN^{2n})\quad\partial^\alpha a\in L^\infty(\RR^{2n})\iff 
(\forall\alpha\in\NN^{2n})\quad \Op(\partial^\alpha a)\in\BB(L^2(\RR^n)),$$  
where  $\partial^\alpha $ stand as usually for the partial derivatives; 
see for instance \cite{Bo97}. 

\subsection*{Outline of the paper} 
In Section~\ref{section2} we give some definitions and restate the main result 
in a more explicit form (Theorem~\ref{mainthm1} and Theorem~\ref{mainthm2}). 
In Section~\ref{section-conv} we provide some auxiliary results on convolutions for certain transformation groups and the Weyl-Pedersen calculus. 
The main result is proved in Section~\ref{section-proofs}. 
In Section~\ref{computations} we give a method of computing the invariant differential operators on a flat orbit and illustrate it by two examples that correspond to items N5N3 and N5N6 in~\cite{Pe88}. 
And finally, we apply this method in Section~\ref{final} for convolution operators on the Heisenberg group 
and obtain a characterisation of the $L^2$-bounded convolution operators.

We refer to \cite{CG90} for background information on representation theory of nilpotent Lie groups.

\section{Smooth functions on the coadjoint orbit}\label{section2}

In this section we show that the space $\Cc^\infty_b(\Oc)$ can be given in terms of some infinitesimal vector fields of an action of $G$ on $\Oc$, and we restate Theorem~\ref{main_introd} accordingly.

In the rest of the paper we shall identify $G$ with $\gg$, by using the exponential map, so that 
$G= (\gg, \cdot_G)$, where $\cdot_G$ is the Baker-Campbell-Hausdorff multiplication. 
We shall however keep notation as $G$,  $\Ad_G$ each time when it is important to point out that operations are considered on the group.

We may assume that the center $\zg$ of $\gg$ is one dimensional. 
Let $X_0, X_1, \dots, X_d$ be a Jordan-H\"older basis such that  $X_0$ generates $\zg$,
and denote by $\xi_0, \xi_1, \dots, \xi_d\in \gg^\ast$ the dual basis. 
Recall that the coadjoint orbit $\Oc$ is assumed to be flat and of dimension $d$, hence
$\Oc=\xi_0+\zg^\perp= \{\xi\in \gg^\ast\mid \scalar{\xi}{X_0}=1\}$.

Denote by  $\gg_0=\mathrm{span}\{X_1, \dots, X_d\}$.
Since $\gg= \zg \dot{+} \gg_{e}$, we have the linear isomorphism $ \gg_0\simeq \gg/\zg$, thus $\gg_0$ has the structure of a nilpotent Lie algebra.
We denote by $\cdot$ the Baker-Campbell-Hausdorff multiplication on $\gg_0$, by $G_0= (\gg_0, \cdot)$ the corresponding group, and by $\Ad$ the adjoint representation associated to $G_0$. 
Note that 
$$ 
X\cdot Y- X\cdot_G Y,\, \Ad(X) Y -\Ad_G(X) Y   \in \zg,   \; \; 
\Ad_G^{\ast}( X\cdot Y)= \Ad_G^\ast (X\cdot_G Y), 
$$
for every  $X, Y\in \gg_0$.

\begin{definition}\label{axi0} 
\normalfont
 We will need a global chart and a parameterization of $\Oc$, which can be constructed as follows: 
\begin{enumerate}
\item The simplest global chart is 
$$P\colon\Oc\to\gg_0^*,\quad P(\xi)=\xi\vert_{\gg_0}.$$
For every $\eta\in\gg_0^*$, the functional $\xi:=P^{-1}(\eta)\in\gg^*$ is uniquely determined 
by the conditions $\xi\vert_{\gg_0}=\eta$ and $\langle\xi,X_0\rangle=1$. 
\item The global parameterization defined by  the coadjoint action of $G$, 
$$A_{\xi_0}\colon\gg_0\to\Oc,\quad A_{\xi_0}(X)=\Ad_G^*(X)\xi_0=\xi_0\circ\ee^{-\ad_{\gg}X}.$$
\end{enumerate}
\end{definition}

Recall that $A_{\xi_0}$ is a polynomial diffeomorphism (see \cite{Pe89}). 
It induces the isomorphism
 $$ A_{\xi_0}^\ast \colon \Sc'(\Oc) \xrightarrow{\sim} \Sc'(\gg_0), \quad (A_{\xi_0}^\ast a)(X) = a (A_{\xi_0}( X)).$$

\begin{definition}\label{actions}
\normalfont
 Consider the following actions of $\gg_0$ in spaces of distributions on $\gg_0$ and  $\Oc$, respectively:
 
 \begin{enumerate}
  \item \label{actions_1} $\lambda, \rho\colon \gg_0\to \End(\Sc'(\gg_0))$ are induced by the left, respectively right, 
  regular representations of $\gg_0$ by 
  $$ (\lambda(X) b)(Y) = b((-X)\cdot Y), \quad (\rho(X)b)(Y) = b(Y\cdot X).$$
   \item \label{actions_2} $\alpha, \beta \colon \gg_0\to \End(\Sc'(\Oc))$ are defined by  by
   $$ (\alpha(X) a) (\xi) = a(\Ad^\ast_G(-X)\xi), \quad 
   (\beta(X)a)(\Ad^\ast_G(Y)\xi_0) = a( \Ad^\ast_G(Y\cdot X)\xi_0)$$
  \end{enumerate}
  \end{definition}

With the notation above, we recall the covariance property of the Weyl-Pedersen calculus (see \cite[Thm. 3.5]{BB10c}), that is, 
\begin{equation}\label{covariance} 
\Op(\alpha(X) a) = \pi(-X) \Op(a) \pi(X), \quad \text{for all} \; X\in \gg_0.
\end{equation} 
Also, note that $\alpha$ and $\beta$ commute, that is, 
\begin{equation}\label{commutation}
\alpha(X) \beta(Y) =\beta(Y)\alpha (X), \quad X, Y\in \gg_0.
\end{equation}

\begin{definition}\label{derivatives} 
\normalfont
 For every $X\in \gg_0$, define
 
 \begin{enumerate}
  \item\label{derivatives_1}
   For every $f\in C^1(\gg_0)$  and $Y\in \gg_0$, 
   $$
   (\de \lambda (X) f)(Y) =\frac{\de}{\de t}\Big\vert_{t=0} (\lambda(tX)f)(Y) \quad  
   (\de \rho (X) f)(Y) =\frac{\de}{\de t}\Big\vert_{t=0} (\rho(tX)f)(Y). 
   $$  
   \item\label{derivatives_2} 
   For every $a\in C^1(\Oc)$ and $\xi\in \Oc$ set  
   $$
   (\de \alpha(X) a) (\xi)   =\frac{\de}{\de t}\Big\vert_{t=0} (\alpha(tX)a)(\xi), \quad
   (\de \beta(X) a) (\xi)   =\frac{\de}{\de t}\Big\vert_{t=0} (\beta(tX)a)(\xi).
    $$
  \end{enumerate}
\end{definition}

\begin{definition}
\normalfont
Recall from the Introduction that $\Diff(\Oc)$ denotes the space of 
linear differential operators on $\Oc$ that are invariant to the coadjoint action of $G$. 

We define $\Diff(\gg_0^*)$ as the space of all linear differential operators on $\gg_0^*$ 
which are pushforward through $P$ of operators in $\Diff(\Oc)$.  
Hence $D\in\Diff(\gg_0^*)$ if and only if there exists $\widetilde{D}\in\Diff(\Oc)$ 
such that $\widetilde{D}(b\circ P)=(Db)\circ P$ for every $b\in C^\infty(\gg_0^*)$. 

Similarly, $\Diff(G_0)$ is defined as the space of all linear differential operators on~$\gg_0$ 
which are pullbacks through $A_{\xi_0}$ of operators in $\Diff(\Oc)$.  
Thus $D\in\Diff(G_0)$ if and only if there exists $\widetilde{D}\in\Diff(\Oc)$ 
such that for every $a\in C^\infty(\Oc)$ we have $\widetilde{D}(a\circ A_{\xi_0})=(Da)\circ A_{\xi_0}$. 
\qed
\end{definition}

\begin{remark}\label{alg}
\normalfont
For all $X,Y\in\gg$ we have 
$$ A_{\xi_0} (X\cdot Y)=\Ad_G^*(X)(A_{\xi_0}(Y)),$$
and therefore  a linear differential operator $D$ on $\gg_0$ belongs to $\Diff(G_0)$ 
if and only if for all $b\in C^\infty(\gg_0)$ and $X\in\gg_0$ we have 
$D(L_Xb)=L_X(Db)$, where we use the left translation $L_X\colon\gg_0\to\gg_0$, $L_X(Y)=X\cdot Y$. 
Equivalently, $\Diff(G_0)$ is the space of all left-invariant linear differential operators on $\gg_0$, 
and then it follows by \cite[Ch. II, Th. 4.3]{He02} 
that there exists the isomorphism of associative algebras
$$\U((\gg_0)_{\CC})\mathop{\to}\limits^\sim\Diff(G_0), \quad u\mapsto\de\rho(u),$$
where $\U((\gg_0)_{\CC})$ stands for the universal enveloping algebra of the complexification of $\gg_0$. 
(See also \cite[Sect. 4]{CG92}.)
In particular, by using the Poincar\'e-Birkhoff-Witt theorem, we see that 
the set $\{\de\beta(X_1)^{p_1}\cdots\de\beta(X_d)^{p_d}\mid p_1,\dots,p_d\in\NN\}$ 
is a linear basis in~$\Diff(G_0)$. 
\qed
\end{remark}

Note that for every $u\in U((\gg_0)_{\CC})$ the following diagram
\begin{equation}\label{diagram}
\xymatrix{\Sc'(\Oc) 
\ar[d]_{\de\beta(u)} \ar[r]^{A^\ast_{\xi_0}} & \Sc'(\gg_0) \ar[d]^{\de\rho(u)}
\\
\Sc'(\Oc) \ar[r]^{A^\ast_{\xi_0}} & \Sc'(\gg_0) 
}
\end{equation}
is commutative, since it is clear that $\rho(X) A^*_{\xi_0} = A_{\xi_0}^\ast\beta(X)$ 
for every $X \in \gg_0$. 
It follows from Remark~\ref{alg} above that
 $a\in \Cc^\infty_b(\Oc)$ if and only if  
$$\norm{\de \beta(X_1)^{p_1} \cdots \de \beta({X_d})^{p_d} a}_{L^{\infty}(\Oc)}< \infty$$
 for all $p_1, \dots, p_d\in \NN^d$.
 Also the Fr\'echet topology on $\Cc^\infty_b(\Oc)$ can be described by the family of seminorms
$$ a\mapsto \norm{a}_{(\bar p )}= \norm{\de\beta({X}_1)^{p_1} \cdots \de\beta(X_d)^{p_d} a}_{L^{\infty}(\Oc)}.$$

By using the above remarks and explicit definitions we restate Theorem~\ref{main_introd} in the next two theorems. 
The first result is a Calder\'on-Vaillancourt type result.
\begin{theorem}\label{mainthm1}
The mapping $\Op\colon \Cc_{b}^{\infty}(\Oc) \to \BB(\Hc)$ is well defined and continuous.  
\end{theorem}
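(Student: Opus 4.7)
The overall strategy is to prove the quantitative estimate
$$\norm{\Op(a)}_{\BB(\Hc)}\le C\sum_{p_1+\dots+p_d\le N}\norm{a}_{(\bar p)}$$
with $N$ depending only on $d=\dim\gg_0$. The starting point is coadjoint invariance of both sides: by \eqref{commutation} every $\de\beta(u)$ commutes with $\alpha(X)$, so each seminorm $\norm{\cdot}_{(\bar p)}$ is $\alpha$-invariant; and the covariance identity \eqref{covariance} together with unitarity of $\pi$ gives $\norm{\Op(\alpha(X)a)}_{\BB(\Hc)}=\norm{\Op(a)}_{\BB(\Hc)}$. These two invariances will let me reduce global control on $a$ to uniform control on a family of compactly supported pieces that differ from one another only by $\alpha$-translation.

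Next, I would choose a bump $\psi\in C_c^\infty(\Oc)$ satisfying $\int_{\gg_0}|\beta(Y)\psi(\xi)|^2\,\de Y=1$ for every $\xi\in\Oc$ (possible because the $\beta$-action is transitive on $\Oc$ and Haar measure on $G_0$ is Lebesgue) and decompose $a=\int_{\gg_0}a_Y\,\de Y$ with $a_Y=(\beta(Y)\psi)^2 a$. Each $a_Y$ is compactly supported near $\Ad_G^*(Y)\xi_0$, and covariance gives $\Op(a_Y)=\pi(-Y)\Op(\alpha(-Y)a_Y)\pi(Y)$; since $\alpha$ and $\beta$ commute, the Schwartz-type seminorms of $\alpha(-Y)a_Y$ are controlled uniformly in $Y$ by finitely many $\Cc_b^\infty(\Oc)$-seminorms of $a$, and for each fixed $Y$ the operator $\Op(\alpha(-Y)a_Y)$ is a priori bounded since $\alpha(-Y)a_Y\in\Sc(\Oc)$. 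To reassemble the global operator, I would invoke a Cotlar--Stein argument for the family $\{\Op(a_Y)\}_{Y\in\gg_0}$, which requires
$$\sup_{Y\in\gg_0}\int_{\gg_0}\bigl(\norm{\Op(a_Y)^*\Op(a_{Y'})}^{1/2}+\norm{\Op(a_Y)\Op(a_{Y'})^*}^{1/2}\bigr)\,\de Y'<\infty.$$

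The main obstacle will be establishing these almost-orthogonality estimates, which demand rapid decay of the cross-norms as $|Y-Y'|\to\infty$. The mechanism should be that the Weyl--Pedersen symbol of $\Op(a_Y)^*\Op(a_{Y'})$ is a twisted product of two symbols concentrated near the distant points $\Ad_G^*(Y)\xi_0$ and $\Ad_G^*(Y')\xi_0$ of $\Oc$, hence small in any reasonable norm. Making this quantitative would rely on the convolution machinery prepared in Section~\ref{section-conv}, which translates Weyl--Pedersen composition into convolution on a transformation group, combined with repeated integration by parts against elements of $\Diff(\Oc)$, legitimate by Remark~\ref{alg}, to extract arbitrary polynomial decay in $Y-Y'$. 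Once the Cotlar--Stein bound is in place, continuity of $\Op$ on all of $\Cc_b^\infty(\Oc)$ follows by a standard approximation from the dense subspace $\Sc(\Oc)$, on which the Weyl--Pedersen calculus is already known to produce (trace-class, hence) bounded operators.
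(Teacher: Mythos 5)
Your proposal follows the classical Calder\'on--Vaillancourt route (continuous partition of unity adapted to the group action, covariance to recenter each piece, Cotlar--Stein to reassemble), which is genuinely different from the paper's argument. The paper instead uses the Cordes--Kato method: by Lemma~\ref{decomp-algebra} (a finite decomposition $\delta=\sum_j\tilde u_j\ast F_j$ of the Dirac distribution with $F_j\in C_0^m(\gg_0)$, imported from \cite{DDJP09}) one writes $a=\sum_j A_{\xi_0}^*(\de\beta(u_j)a)\ast f_j$ with $\de\beta(u_j)a\in L^\infty$ and $\Op(f_j)$ trace class (Proposition~\ref{traceclass}), and then Lemma~\ref{oplemma} and Proposition~\ref{Kato} give $\Op(a)=\sum_j b_j\{\Op(f_j)\}$ with $\Vert b\{C\}\Vert\le\Vert b\Vert_{L^\infty}\Vert C\Vert_{\Sg_1}$. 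This avoids almost-orthogonality entirely, which is precisely where your proposal has a genuine gap: the estimate $\norm{\Op(a_Y)^*\Op(a_{Y'})}\le C_N(1+d(Y,Y'))^{-N}(\cdots)$ is the whole analytic content of the theorem, and it does not follow from the convolution machinery of Section~\ref{section-conv}, which contains no composition formula for the Weyl--Pedersen calculus and no decay estimates for products of quantizations of symbols with distant supports. Asserting that ``repeated integration by parts against elements of $\Diff(\Oc)$'' extracts polynomial decay in $Y-Y'$ is a statement about oscillatory twisted-convolution integrals on a general nilpotent group that would itself require a proof at least as long as the paper's.

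Two further concrete problems. First, your partition is built from $\beta$-translates $(\beta(Y)\psi)^2$ but recentered via the $\alpha$-covariance \eqref{covariance}; since $\alpha$ and $\beta$ are left and right translations in the parametrization $A_{\xi_0}$, the support of $\alpha(-Y)a_Y$ lies over $Y\cdot K\cdot(-Y)$, which is not uniformly compact in $Y$ for a non-abelian $\gg_0$. You should use $\alpha$-translates of the bump (the normalization $\int_{\gg_0}|\alpha(Y)\psi(\xi)|^2\,\de Y=1$ works equally well by unimodularity), so that $\alpha(-Y)a_Y=\psi^2\cdot\alpha(-Y)a$ has fixed support and, since $\alpha$ and $\beta$ commute by \eqref{commutation}, uniformly controlled $\de\beta$-derivatives. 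Second, the closing step is wrong as stated: $\Sc(\Oc)$ is \emph{not} dense in $\Cc_b^\infty(\Oc)$ --- the paper notes its closure is $\Cc_\infty^\infty(\Oc)$, and $a\equiv 1$ is not in it --- so continuity cannot be obtained ``by approximation from the dense subspace $\Sc(\Oc)$''; one must instead combine the a~priori estimate on nice symbols with a cutoff and a weak-$*$ convergence argument in $\Lc(\Hc_\infty,\Hc_{-\infty})$.
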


The second result provides the characterization of the space of operators given by symbols in $\Cc_{b}^{\infty}(\Oc)$. 
\begin{theorem}\label{mainthm2}
$\Cc_{b}^{\infty}(\Oc)$ is precisely the set of those symbols $a\in \Sc'(\Oc)$ such that the operator
$\Op(\de\beta(X_{1})^{p_1}\cdots \de\beta(X_d)^{p_d} a)$ is bounded for every $(p_1, \dots, p_d)\in \NN^d$.
The Fr\'echet topology of $C_b^\infty(\Oc)$ is equivalent to that defined by the family of seminorms 
$\{a\mapsto\norm{\Op(\de\beta(X_{1})^{p_1}\cdots \de\beta(X_d)^{p_d} a)}\}_{p_1, \cdots, p_d\in \NN}$.
\end{theorem}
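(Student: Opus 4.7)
The forward inclusion $\Cc^\infty_b(\Oc)\subseteq\{a\in\Sc'(\Oc):\Op(\de\beta(u)a)\in\BB(\Hc)\text{ for every PBW element }u\}$, together with a continuity estimate of the form $\|\Op(\de\beta(u)a)\|_{\BB(\Hc)}\lesssim\sum_{\bar p\in F_u}\|a\|_{(\bar p)}$ for some finite $F_u\subset\NN^d$, follows directly from Theorem~\ref{mainthm1}. Indeed, Remark~\ref{alg} identifies $\de\beta(\U((\gg_0)_{\CC}))$ with the algebra of left-invariant differential operators on $G_0$, which via diagram~(\ref{diagram}) sits inside $\Diff(\Oc)$. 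Since $\Cc^\infty_b(\Oc)$ is stable under $\Diff(\Oc)$ by its very definition, we have $\de\beta(u)a\in\Cc^\infty_b(\Oc)$ whenever $a\in\Cc^\infty_b(\Oc)$, and Theorem~\ref{mainthm1} applied to $\de\beta(u)a$ yields the operator-norm bound.

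The converse inclusion and the reverse topological estimate are expected to rest on a single pointwise evaluation inequality at the reference functional $\xi_0$: the claim would be that there exist $N\in\NN$, elements $u_1,\dots,u_N\in\U((\gg_0)_{\CC})$, and a constant $C>0$ such that
\begin{equation*}
|b(\xi_0)|\leq C\sum_{j=1}^N\|\Op(\de\beta(u_j)b)\|_{\BB(\Hc)}
\end{equation*}
for every $b\in\Sc'(\Oc)$ for which the right-hand side is finite. Granting this, the covariance relation~(\ref{covariance}) together with the commutation~(\ref{commutation}) propagates the estimate from $\xi_0$ to every point of $\Oc$: for arbitrary $X\in\gg_0$ and $u\in\U((\gg_0)_{\CC})$, setting $b:=\alpha(X)\de\beta(u)a$ yields
\begin{equation*}
\Op(\de\beta(u_j)b)=\Op(\alpha(X)\de\beta(u_j u)a)=\pi(-X)\Op(\de\beta(u_j u)a)\pi(X),
\end{equation*}
whose operator norm equals $\|\Op(\de\beta(u_j u)a)\|$ by unitarity of $\pi$, while $(\alpha(X)\de\beta(u)a)(\xi_0)=(\de\beta(u)a)(\Ad^\ast_G(-X)\xi_0)$. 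Since $\Ad^\ast_G(-X)\xi_0$ sweeps out the entire orbit $\Oc$ as $X$ varies over $\gg_0$, one recovers
\begin{equation*}
\|\de\beta(u)a\|_{L^\infty(\Oc)}\leq C\sum_{j=1}^N\|\Op(\de\beta(u_j u)a)\|_{\BB(\Hc)},
\end{equation*}
which simultaneously shows $a\in\Cc^\infty_b(\Oc)$ and furnishes the reverse direction of the Fr\'echet topological equivalence.

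The main obstacle is therefore the pointwise evaluation inequality at $\xi_0$. The plan is to pull the problem back to $\gg_0$ via the polynomial diffeomorphism $A_{\xi_0}$ of Definition~\ref{axi0}: writing $\widetilde b:=A^\ast_{\xi_0}b\in\Sc'(\gg_0)$, diagram~(\ref{diagram}) converts $\de\beta(u)b$ into the left-invariant derivative $\de\rho(u)\widetilde b$, and $|b(\xi_0)|$ becomes $|\widetilde b(0)|$. On $\gg_0\cong\RR^d$, a Sobolev-type embedding reduces the task to controlling suitably many $L^2$-type norms of left-invariant derivatives of $\widetilde b$; in turn, these should be realisable as matrix coefficients $\langle\Op(\de\beta(u_j)b)\phi,\psi\rangle$ of the Weyl-Pedersen operators against a fixed pair of Schwartz vectors, modulo a smoothing that is absorbed by passing to further derivatives. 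The requisite convolution formulas linking the Weyl-Pedersen calculus to the action of $G$ on $\gg_0$ are exactly the auxiliary results of Section~\ref{section-conv}; the hardest part will be to keep careful track of the order and number of operators $\de\beta(u_j)$ needed so that the estimate closes with a finite constant $C$ and a finite index set.
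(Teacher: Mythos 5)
Your overall architecture is the right one and matches the paper's: the forward inclusion and one half of the topological equivalence do follow from Theorem~\ref{mainthm1} exactly as you say, and your propagation step --- using \eqref{commutation} and the covariance \eqref{covariance} to upgrade an estimate at $\xi_0$ to an $L^\infty(\Oc)$ estimate at the cost of replacing $u_j$ by $u_ju$ --- is precisely the mechanism the paper uses; there it is packaged as the hypothesis $\norm{\alpha(X)a}_{B}=\norm{a}_{B}$ in Lemma~\ref{Bony}, applied to the Banach space $B=\{a\in\Sc'(\Oc)\mid \Op(a)\in\BB(\Hc)\}$ with $\norm{a}_B=\norm{\Op(a)}_{\BB(\Hc)}$.

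The gap is that the pointwise evaluation inequality at $\xi_0$, which you correctly identify as the crux, is only asserted, and the route you sketch for it (Sobolev embedding on $\gg_0\cong\RR^d$, then realising local $L^2$-type norms of $\de\rho(u)\widetilde b$ as matrix coefficients) does not obviously close: matrix coefficients only give you pairings $\langle \de\beta(u_j)b,\Wig(\phi,\psi)\rangle$ against Wigner functions, and recovering a point value from such pairings requires a reproducing identity that you never produce. What actually does the job in the paper is Lemma~\ref{decomp-orbit}: the parametrix-type decomposition $\delta=\sum_j\tilde u_j\ast F_j$ with $F_j\in C_0^m(\gg_0)$ (from \cite{DDJP09}) yields $a=\sum_{j}A_{\xi_0}^*(\de\beta(u_j)a)\ast f_j$ with $f_j\in C_0^m(\Oc)$, and the value of the $j$-th summand at $\Ad_G^*(X)\xi_0$ is exactly the pairing $\langle\alpha(-X)\de\beta(u_j)a,\check f_j\rangle$. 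Since the unit ball of $B$ is bounded in $\Sc'(\Oc)$, such pairings against a fixed compactly supported $C^m$ function are bounded by $C\norm{\de\beta(u_j)a}_{B}$; here $m$ is dictated by that boundedness (and by Proposition~\ref{traceclass}), not by a Sobolev count. Note also that this decomposition is what makes $b(\xi_0)$ meaningful in the first place for a general $b\in\Sc'(\Oc)$ with finite right-hand side: it exhibits $b$ as a finite sum of continuous functions, a regularity upgrade your statement quietly assumes. So: correct skeleton and correct easy direction, but the central lemma is missing and the proposed substitute would need substantial repair rather than mere bookkeeping of orders and indices.
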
 

The proofs will be provided in Section~\ref{section-proofs}, after some preparations in the next section.

\section{Convolutions and operator calculus}\label{section-conv}

In this section we recall the definition and certain properties of a convolution between functions on $\gg_0$ and distributions on $\Oc$.  
In particular, we give a decomposition for tempered distributions on $\Oc$ in sums of suitable convolutions. 
Technically, this will replace integration by parts and  regularizations that are used in the classical case when $\gg_0$ is commutative. 
Then we use this notion of convolution to link the Weyl-Pedersen calculus and the Kato operatorial calculus (\cite{Ka76}) in this context.

 We need the involutive anti-automorphism 
 $$ U((\gg_0)_{\CC}) \to U((\gg_0)_{\CC}) , \quad u \mapsto u^{\top}$$
 uniquely determined by the condition that $X^{\top} = -X$ when $X\in \gg_0$. 
 Since 
 $\de\lambda\colon \gg_0 \to \End(C^\infty(\gg_0))$ is a Lie algebra homomorphism, it extends uniquely 
 to $\de\lambda\colon   U((\gg_0)_{\CC}) \to \End(C^\infty(\gg_0))$  which is an homomorphism of 
 associative algebras. 
 Similarly for $\rho$ and $\alpha$.  
 
 We recall that  $U((\gg_0)_{\CC})$ can be canonically identified with the space of distributions 
 on $\gg_0$ with support at $0$. 
 Thus it makes sense to consider the convolution $u\ast h$ for every $h\in \Sc'(\gg_0)$ and $u \in  U((\gg_0)_{\CC})$. 
Here convolution uses the group multiplication of $\gg_0$ and not the linear structure.

\begin{lemma}\label{conv}
 For every $u \in U((\gg_0)_{\CC})$, $\varphi\in C^\infty (\gg_0)$  and $\psi\in \Ec'(\gg_0)$ we have 
 \begin{itemize}
  \item[(a)]  $\de \lambda(u) \varphi = u\ast \varphi$, 
   \item[(b)]   $\de \rho (u) \varphi = \varphi\ast u^{\top} $, 
   \item[(c)]  $(\de \rho (u) \varphi)\ast \psi  = \varphi\ast ( \de\lambda( u^{\top})\psi) $.
  \end{itemize}
 \end{lemma}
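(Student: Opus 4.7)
My plan is to reduce each of the three identities to a direct check on generators $X\in\gg_0$ and then extend algebraically to all of $U((\gg_0)_{\CC})$. Two ingredients enable this reduction: (i)~$\de\lambda$ and $\de\rho$ are algebra homomorphisms from $U((\gg_0)_{\CC})$ into $\End(C^\infty(\gg_0))$ (for $\de\rho$, see Remark~\ref{alg}; for $\de\lambda$ it follows similarly from the fact that the left regular representation restricts to a Lie algebra homomorphism $\gg_0\to\End(C^\infty(\gg_0))$); (ii)~the canonical linear identification of $U((\gg_0)_{\CC})$ with the space of distributions on $\gg_0$ supported at the origin carries the enveloping-algebra product to the convolution product, and in particular sends $X\in\gg_0$ to the distribution $\varphi\mapsto\frac{\de}{\de t}\big|_{t=0}\varphi(tX)$.

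For (a), both $u\mapsto\de\lambda(u)\varphi$ and $u\mapsto u\ast\varphi$ are linear in $u$ and turn the enveloping-algebra product into composition, by (i) and (ii). Since $\gg_0$ generates $U((\gg_0)_{\CC})$ as an algebra, it suffices to verify the formula when $u=X\in\gg_0$. The definition of convolution by a point-supported distribution on the group $G_0$ then yields
\[
(X\ast\varphi)(Y)=\frac{\de}{\de t}\Big|_{t=0}\varphi((tX)^{-1}\cdot Y)=\frac{\de}{\de t}\Big|_{t=0}\varphi((-tX)\cdot Y)=(\de\lambda(X)\varphi)(Y),
\]
which matches the formula in Definition~\ref{derivatives}.

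Part (b) follows by the same pattern, now using that $\top$ is an involutive antiautomorphism and right convolution is itself order-reversing, so that both sides of (b) become anti-homomorphisms in $u$ that agree on generators: for $X\in\gg_0$ one has $X^\top=-X$ and
\[
(\varphi\ast X^\top)(Y)=-\frac{\de}{\de t}\Big|_{t=0}\varphi(Y\cdot(tX)^{-1})=\frac{\de}{\de t}\Big|_{t=0}\varphi(Y\cdot tX)=(\de\rho(X)\varphi)(Y).
\]
Part (c) is then immediate from (a), (b), and associativity of convolution, which is available because $u^\top$ has compact support at $\{0\}$:
\[
(\de\rho(u)\varphi)\ast\psi=(\varphi\ast u^\top)\ast\psi=\varphi\ast(u^\top\ast\psi)=\varphi\ast(\de\lambda(u^\top)\psi).
\]
The one genuinely technical point is the algebra-level identification in (ii) and the compatibility of $\top$ with right convolution that powers the reductions in (a) and (b); once this framework is in place, the proof comes down to the displayed one-line computations on generators.
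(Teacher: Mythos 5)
Your proof is correct, and it is more self-contained than the paper's: for parts (a) and (b) the paper simply points to Pedersen's work (\cite{Pe94}) rather than giving an argument, and then, exactly as you do, obtains (c) from (a), (b) and associativity of convolution. Your reduction to generators is sound: the paper itself records (in Section~\ref{section-conv}) that $\de\lambda$ and $\de\rho$ extend to associative-algebra homomorphisms on $U((\gg_0)_{\CC})$, the identification of $U((\gg_0)_{\CC})$ with distributions supported at $0$ intertwines the enveloping-algebra product with convolution, and your one-line computations for $u=X\in\gg_0$ match Definitions~\ref{actions} and~\ref{derivatives}. One small terminological slip: in (b) the two order-reversals cancel, so both $u\mapsto\de\rho(u)\varphi$ and $u\mapsto\varphi\ast u^{\top}$ are \emph{homomorphisms} in $u$ (not anti-homomorphisms as you write); since they are multiplicative in the same sense and agree on generators, your conclusion stands unaffected. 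The only other point worth making explicit is the associativity $(\varphi\ast u^{\top})\ast\psi=\varphi\ast(u^{\top}\ast\psi)$ used in (c), which you correctly justify by the compact support of $u^{\top}$ (and of $\psi\in\Ec'(\gg_0)$). In short: same logical skeleton as the paper for (c), and a legitimate direct verification replacing the paper's citation for (a) and (b).
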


\begin{proof}
For  (a) and (b)  see  for instance~\cite{Pe94}, while (c) follows from (a) and~(b).
\end{proof}

 \begin{lemma}\label{equivdef2}
For $\Phi \in C^\infty(\gg_0)$, the following conditions are equivalent: 
 \begin{itemize} 
 \item[(a)] 
 For every $\bar p= (p_1, \dots, p_d)\in \NN^d$, 
 $$ \de \rho(X_1)^{p_1} \cdots \de \rho(X_d)^{p_d} \Phi \in  L^\infty(\gg_0).$$
\item[(b)] For every $u \in U((\gg_0)_{\CC})$, $\de\rho(u)\Phi \in L^\infty(\gg_0)$.
\end{itemize}
\end{lemma}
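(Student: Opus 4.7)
The plan is to deduce this directly from the Poincaré–Birkhoff–Witt (PBW) theorem applied to $U((\gg_0)_{\CC})$, using the fact that $\de\rho\colon U((\gg_0)_{\CC})\to\End(C^\infty(\gg_0))$ is a homomorphism of associative algebras (recalled in Remark~\ref{alg}).

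The implication (b)$\Rightarrow$(a) is immediate by taking $u=X_1^{p_1}\cdots X_d^{p_d}\in U((\gg_0)_{\CC})$, since the algebra homomorphism property gives
\[
\de\rho(X_1^{p_1}\cdots X_d^{p_d})\Phi = \de\rho(X_1)^{p_1}\cdots\de\rho(X_d)^{p_d}\Phi.
\]

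For (a)$\Rightarrow$(b), I would invoke the PBW theorem, which asserts that the monomials $\{X_1^{p_1}\cdots X_d^{p_d}\mid (p_1,\dots,p_d)\in\NN^d\}$ form a linear basis of $U((\gg_0)_{\CC})$. Hence any $u\in U((\gg_0)_{\CC})$ admits a unique finite expansion $u=\sum_{\bar p}c_{\bar p}\,X_1^{p_1}\cdots X_d^{p_d}$ with $c_{\bar p}\in\CC$, and by linearity of $\de\rho$ together with the homomorphism property,
\[
\de\rho(u)\Phi=\sum_{\bar p}c_{\bar p}\,\de\rho(X_1)^{p_1}\cdots\de\rho(X_d)^{p_d}\Phi.
\]
This is a finite sum of functions in $L^\infty(\gg_0)$ by assumption (a), hence it lies in $L^\infty(\gg_0)$.

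There is no genuine obstacle here: the lemma is essentially a repackaging of PBW once one observes that $\de\rho$ intertwines the algebra structure of $U((\gg_0)_{\CC})$ with composition of differential operators, which was already recorded in Remark~\ref{alg}. The only point to be slightly careful about is that the monomials are used in a \emph{fixed} order matching the one in statement (a), which is precisely the PBW ordering associated with the Jordan–Hölder basis $X_1,\dots,X_d$.
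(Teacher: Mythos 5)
Your argument is correct and is essentially identical to the paper's own proof: both directions reduce to the Poincar\'e--Birkhoff--Witt basis $\{X_1^{p_1}\cdots X_d^{p_d}\}$ of $\U((\gg_0)_{\CC})$ together with the fact that $\de\rho$ is an algebra homomorphism. No discrepancy to report.
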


\begin{proof}
 Since $X_1, \dots, X_d$ is a basis for the Lie algebra $\gg_0$, it follows by the Poincar\'e-Birkhoff-Witt theorem that 
 $\{X_1^{p_1}\cdots X_d^{p_d}\mid \bar p\in \NN^d\}$ is a basis for $U((\gg_0)_{\CC})$.
 Hence the conclusion follows at once.
 \end{proof}

\begin{remark}\label{topology_1}
\normalfont
Denote by $C_b^\infty(\gg_0)$ the space of $\Phi\in C^\infty(\gg_0)$ such that 
for every $\bar p= (p_1, \dots, p_d)\in \NN^d$, 
\begin{equation}\label{topology1_1} 
 \de \rho(X_1)^{p_1} \cdots \de \rho(X_d)^{p_d} \Phi \in  L^\infty(\gg_0).
 \end{equation}
 This is a Fr\'echet space with the topology given by the seminorms,  indexed over $\bar p \in \NN^d$, 
 $$ \Phi \mapsto \norm{ \de \rho(X_1)^{p_1} \cdots \de \rho(X_d)^{p_d} \Phi}_{L^\infty(\gg_0)}$$ 
The previous lemmas show that one can replace condition \eqref{topology1_1} by $\de\rho(u) \Phi\in L^{\infty}(\gg_0)$ for all $u\in U((\gg_0)_{\CC})$.
Moreover, the seminorms, indexed over $U((\gg_0)_{\CC})$, 
$$\Phi\mapsto \norm{\de\rho(u) \Phi}_{L^\infty(\gg_0)}$$
define an equivalent topology on $C_b^\infty(\gg_0)$.
\end{remark}

\begin{lemma}\label{decomp-algebra}
 For every $m \in \NN$, there exist finite families $\{u_j\}_{j\in J}$ in $U((\gg_0)_{\CC})$ and 
 $\{F_j\}_{j\in J}$ in $C_0^m(\gg_0)$ such that for every $h \in \Sc'(\gg_0)$, 
 $$ h = \sum\limits_{j\in J} \de \rho(u_j) h \ast F_j. $$
 \end{lemma}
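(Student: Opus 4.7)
The plan is to reduce the desired identity to a decomposition of the Dirac distribution $\delta_0\in\Sc'(\gg_0)$ at the origin, and then construct that via a parametrix for a suitable left-invariant elliptic operator. Using parts (a)--(c) of Lemma~\ref{conv} (extended to tempered distributions by duality) together with associativity of convolution---valid here because $F_j$ is compactly supported---, for any $u\in U((\gg_0)_{\CC})$, $F\in C_0^m(\gg_0)$, and $h\in\Sc'(\gg_0)$ one has
$$
(\de\rho(u)h)\ast F \;=\; (h\ast u^{\top})\ast F \;=\; h\ast(u^{\top}\ast F).
$$
Consequently $h=\sum_j(\de\rho(u_j)h)\ast F_j$ for every $h\in\Sc'(\gg_0)$ is equivalent to
$$
\sum_{j\in J}u_j^{\top}\ast F_j \;=\; \delta_0\qquad\text{in }\Sc'(\gg_0),
$$
and it suffices to exhibit a single such finite decomposition of $\delta_0$.

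To produce one, set $\Delta:=X_1^2+\cdots+X_d^2\in U((\gg_0)_{\CC})$, which is self-transposed since $X_j^{\top}=-X_j$, and consider the right-invariant elliptic operator $P:=\de\lambda(1-\Delta)$ on $\gg_0$. For every $N\in\NN$, the iterate $P^N=\de\lambda((1-\Delta)^N)$ admits a tempered fundamental solution $E_N$ that is smooth off the origin and decays rapidly at infinity; for $N$ large enough relative to $m$ and $\dim\gg_0$, $E_N$ is moreover globally of class $C^m$ on $\gg_0$. Such an $E_N$ can be obtained from the heat semigroup $\ee^{t\de\lambda(\Delta)}$ via the Laplace-type representation $E_N=\frac{1}{(N-1)!}\int_0^{\infty}t^{N-1}\ee^{-t}k_t\,\de t$, where $k_t$ is the heat kernel of $\de\lambda(\Delta)$; standard Gaussian-type estimates on $k_t$ and its derivatives yield the required regularity once $2N>\dim\gg_0+m$. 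Choosing a cutoff $\chi\in C_c^{\infty}(\gg_0)$ with $\chi\equiv 1$ on an open neighborhood of $0$, set $F:=\chi E_N\in C_0^m(\gg_0)$. A distributional Leibniz expansion in the left-invariant vector fields $\de\lambda(X_j)$ then yields
$$
P^N F \;=\; \chi\cdot P^N E_N \;+\; G \;=\; \chi\cdot\delta_0 \;+\; G \;=\; \delta_0+G,
$$
where $G$ collects the commutator terms in which at least one derivative lands on $\chi$. Each such term is supported in $\mathrm{supp}\,\chi$ minus a neighborhood of $0$, a set disjoint from the singular point of $E_N$, so the term reduces to a product of a derivative of $\chi$ by a derivative of $E_N$ at points where $E_N$ is smooth; therefore $G\in C_c^{\infty}(\gg_0)\subseteq C_0^m(\gg_0)$. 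Rearranging,
$$
\delta_0 \;=\; (1-\Delta)^N\ast F \;+\; 1\ast(-G) \;=\; u_1^{\top}\ast F_1+u_2^{\top}\ast F_2
$$
with $u_1=(1-\Delta)^N$, $F_1=F$, $u_2=1$, $F_2=-G$, giving the required decomposition.

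The main obstacle is the existence and regularity of the fundamental solution $E_N$: invariant ellipticity of $P$ gives $C^{\infty}$ smoothness away from the origin for free, but the global $C^m$-regularity needed in order to multiply by the cutoff and to control the commutator terms depends on the quantitative heat-kernel bounds alluded to above, or equivalently on classical results on parametrices for invariant elliptic operators on simply connected nilpotent Lie groups.
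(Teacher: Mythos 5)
Your argument is correct, and its skeleton coincides with the paper's: both proofs reduce the statement to a finite decomposition $\delta=\sum_{j\in J}u_j^{\top}\ast F_j$ of the Dirac distribution at the origin, with $u_j\in U((\gg_0)_{\CC})$ and $F_j\in C_0^m(\gg_0)$, and then recover the general case by writing $h=h\ast\delta$ and using Lemma~\ref{conv} together with associativity of convolution (legitimate here since two of the three factors are compactly supported). The difference lies in how that decomposition of $\delta$ is obtained: the paper imports it wholesale from \cite[Lemma 2.3]{DDJP09}, whereas you construct it explicitly via a fundamental solution $E_N$ of the right-invariant elliptic operator $\de\lambda((1-\Delta)^N)$ with $\Delta=X_1^2+\cdots+X_d^2$, which is of class $C^m$ near the origin once $2N>\dim\gg_0+m$, followed by a cutoff $\chi$ equal to $1$ near $0$; the Leibniz terms in which a derivative falls on $\chi$ are supported away from the singularity of $E_N$, hence give a $C_c^\infty$ error $G$, and $\delta=(1-\Delta)^N\ast(\chi E_N)-G$ is exactly the required two-term decomposition. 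This is a legitimate, essentially self-contained alternative: it buys independence from the external reference at the cost of the regularity input for $E_N$, which you correctly flag as the only nontrivial ingredient. That input is indeed classical, and can even be weakened: a local parametrix $P^N E=\delta+R$ with $R$ smooth would do just as well, since $R$ can be absorbed into $G$, so all that is really needed is the standard order-versus-dimension count ($2N>\dim\gg_0+m$) for the $C^m$-regularity of an elliptic parametrix near the diagonal. With that reference supplied, your proof is complete.
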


\begin{proof}
Let $m\in \NN$ be fixed. 
 From Lemma~2.3 in \cite{DDJP09}, along with Lemma~\ref{conv}, we see that there exist finite families 
 $\{\tilde{u}_j\}_{j\in J}$ in $U((\gg_0)_{\CC})$ and 
 $\{F_j\}_{j\in J}$ in $C_0^m(\gg_0)$ such that 
 $ \delta = \sum_{j\in J} \tilde{u}_j \ast F_j$. 
 Hence,  for every $h  \in \Sc'(\gg_0)$, 
 $$ h = h\ast \delta =\sum\limits_{j\in J} h\ast \tilde{u}_j\ast F_j= \sum\limits_{j\in J} (\de\rho( \tilde{u}_j^\top) h)\ast F_j.$$
 Then the lemma follows with $u_j= \tilde{u}_j^\top$. 
\end{proof}

\begin{remark}\label{intertw}\normalfont
It follows by \eqref{diagram} that $a\in \Cc_b^{\infty}(\Oc)$ if and only if $A^*_{\xi_0} a\in C^\infty(\gg_0)$ and 
$$ \de \rho(X_1)^{p_1} \cdots \de \rho(X_d)^{p_d} (A_{\xi_0}^\ast a)\in L^\infty(\gg_0)$$ 
for every 
$\bar p=(p_1, \dots, p_d)\in \NN^d$.
 \end{remark}

\begin{definition}\label{convolution}
\normalfont
For $b\in \Sc'(\gg_0)$ and $a\in \Ec'(\Oc)$ we define $b\ast a\in \Sc'(\Oc)$ by 
$$ \langle b\ast a,\phi\rangle =\langle b\otimes a,\phi^{\Delta}\rangle \quad \text{for all} \; \phi\in \Sc(\Oc), $$
where $\phi^\Delta(Y, \xi) = \phi(\Ad^\ast_G (Y)\xi)$, $Y\in \gg_0$, $\xi\in \Oc$. 
\end{definition}

 \begin{remark}\label{conv-orbit}\normalfont
 Note that the convolution defined above is nothing else than the image on the orbit of the convolution on $\gg_0$, that is,  
 $$A_{\xi_0}^\ast (b\ast a) = b\ast (A_{\xi_0}^\ast a), $$
 for every $b\in \Sc'(\gg_0)$ and $a\in \Ec'(\Oc)$.
 \end{remark}

\begin{lemma}\label{decomp-orbit}
For every $m\in \NN$, there exist finite families $\{u_j\}_{j\in J} \in U((\gg_0)_{\CC})$
and $\{f_j\}_{j\in J}$ in $C_0^m(\Oc)$ such that for every 
$a\in \Sc'(\Oc)$
$$ a= \sum\limits_{j\in J} A_{\xi_0}^{\ast}(\de\beta(u_j)a) \ast f_j.$$ 
 \end{lemma}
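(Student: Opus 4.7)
The plan is to transfer the decomposition from Lemma~\ref{decomp-algebra} on $\gg_0$ to the coadjoint orbit $\Oc$ via the parameterization $A_{\xi_0}$, exploiting the intertwining property~\eqref{diagram} between $\de\rho$ and $\de\beta$, together with the compatibility of convolution with $A_{\xi_0}^*$ recorded in Remark~\ref{conv-orbit}.

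Concretely, I would first apply Lemma~\ref{decomp-algebra} for the given $m$ to obtain finite families $\{u_j\}_{j\in J}\subset U((\gg_0)_{\CC})$ and $\{F_j\}_{j\in J}\subset C_0^m(\gg_0)$ such that $h = \sum_{j\in J} \de\rho(u_j)h\ast F_j$ for every $h\in\Sc'(\gg_0)$. Since $A_{\xi_0}\colon\gg_0\to\Oc$ is a polynomial diffeomorphism (by the result of \cite{Pe89} quoted after Definition~\ref{axi0}), setting $f_j := F_j\circ A_{\xi_0}^{-1}$ produces a finite family in $C_0^m(\Oc)$ satisfying $A_{\xi_0}^* f_j = F_j$; note that compactness of support is preserved because $A_{\xi_0}$ is a homeomorphism, and $C^m$-regularity because $A_{\xi_0}^{-1}$ is smooth.

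Next, given an arbitrary $a\in\Sc'(\Oc)$, I would apply Lemma~\ref{decomp-algebra} to $h := A_{\xi_0}^* a \in \Sc'(\gg_0)$ and rewrite the resulting terms in two steps. By the commutative diagram~\eqref{diagram}, $\de\rho(u_j)(A_{\xi_0}^* a) = A_{\xi_0}^*(\de\beta(u_j)a)$. By Remark~\ref{conv-orbit} together with $F_j = A_{\xi_0}^* f_j$, each convolution term satisfies
$$A_{\xi_0}^*(\de\beta(u_j)a)\ast F_j \;=\; A_{\xi_0}^*(\de\beta(u_j)a)\ast A_{\xi_0}^* f_j \;=\; A_{\xi_0}^*\bigl(A_{\xi_0}^*(\de\beta(u_j)a)\ast f_j\bigr).$$
Summing over $j\in J$, I obtain $A_{\xi_0}^* a = A_{\xi_0}^*\bigl(\sum_{j\in J} A_{\xi_0}^*(\de\beta(u_j)a)\ast f_j\bigr)$, and the injectivity of $A_{\xi_0}^*\colon\Sc'(\Oc)\to\Sc'(\gg_0)$ yields the desired decomposition.

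There is no substantial obstacle: all the analytic content lies in Lemma~\ref{decomp-algebra} (itself reduced to Lemma~2.3 of \cite{DDJP09}) and in the intertwining identities from Lemma~\ref{conv}, Remark~\ref{conv-orbit}, and diagram~\eqref{diagram}. The only point requiring modest care is keeping track of which objects live on $\gg_0$ versus on $\Oc$ when one applies the pullback $A_{\xi_0}^*$ to convolutions, which is precisely the content of Remark~\ref{conv-orbit}.
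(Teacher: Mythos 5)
Your proof is correct and follows essentially the same route as the paper's: apply Lemma~\ref{decomp-algebra} to $A_{\xi_0}^*a$, use the intertwining $\de\rho(u_j)\circ A_{\xi_0}^*=A_{\xi_0}^*\circ\de\beta(u_j)$ from diagram~\eqref{diagram}, and pull the convolution back to $\Oc$ via Remark~\ref{conv-orbit} with $f_j=(A_{\xi_0}^*)^{-1}F_j$. The only difference is that you spell out the injectivity of $A_{\xi_0}^*$ and the preservation of compact support explicitly, which the paper leaves implicit.
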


\begin{proof}
Let $m\in \NN$ be fixed. 
By Lemma~\ref{decomp-algebra} and Remark~\ref{intertw} it follows that there are finite families 
$\{ u_j \}_{j\in J} $ in $U((\gg_0)_{\CC})$,
$\{F_j\}_{j\in J}$ in $C_0^m(\gg_0)$ such that 
$$ A_{\xi_0}^\ast a= \sum\limits_{j\in J} \de\rho(u_j) (A_{\xi_0}^\ast a) \ast F_j
= \sum\limits_{j\in J} A_{\xi_0}^\ast(\de\beta(u_j) a)  \ast F_j.$$
Thus, by Remark~\ref{conv-orbit}, the lemma follows with $f_j= (A_{\xi_0}^\ast)^{-1} F_j$. 
\end{proof}

\begin{proposition}\label{Kato}
The mapping $L^\infty(\gg_0) \times \Sg_1({\Hc}) \to \BB(\Hc)$, 
$$ (b, C) \mapsto b\{C\}= \int\limits_{\gg_0} b(X) \pi(X)^{-1} C\pi(X) \de X $$  
is well defined, bounded, and bilinear, where the integral is so-convergent.
\end{proposition}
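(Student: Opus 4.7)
The strategy is to reduce to rank-one operators $C$ via the Schmidt decomposition and exploit that, since $\Oc$ is flat, $\pi$ is square integrable modulo $Z(G)$. The Duflo--Moore orthogonality relations then furnish a constant $d_\pi>0$ with
\[
\int_{\gg_0}|(\pi(X)u,v)|^2\,\de X = d_\pi^{-1}\|u\|^2\|v\|^2\qquad(u,v\in\Hc),
\]
where $(\cdot,\cdot)$ denotes the inner product in $\Hc$.

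First I would fix a rank-one $C\in\Sg_1(\Hc)$, say $Cu=(u,f)e$ with $e,f\in\Hc$. Then
\[
\pi(X)^{-1}C\pi(X)u = h(X)\,\pi(X)^{-1}e,\qquad h(X):=b(X)\,(\pi(X)u,f),
\]
and the orthogonality relation gives $h\in L^2(\gg_0)$ with $\|h\|_{L^2}\le d_\pi^{-1/2}\|b\|_\infty\|u\|\,\|f\|$. For each $R>0$ the truncation $w_R:=\int_{|X|\le R}h(X)\,\pi(X)^{-1}e\,\de X$ is a genuine Bochner integral in $\Hc$ since the integrand is $L^1$ on $\{|X|\le R\}$. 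To pass to the limit I would pair $w_{R_2}-w_{R_1}$ with itself, apply the Cauchy--Schwarz inequality in $L^2(\gg_0)$, and invoke the orthogonality relation once more, this time on $X\mapsto(e,\pi(X)(w_{R_2}-w_{R_1}))$, to obtain
\[
\|w_{R_2}-w_{R_1}\| \le d_\pi^{-1/2}\|e\|\,\|h\chi_{R_1<|X|\le R_2}\|_{L^2}.
\]
The right-hand side vanishes as $R_1\to\infty$ since $h\in L^2$, so $(w_R)$ is Cauchy in $\Hc$, its limit $b\{C\}u$ exists, and the same computation with $R_1=0$, $R_2=\infty$ yields $\|b\{C\}u\|\le d_\pi^{-1}\|b\|_\infty\|e\|\,\|f\|\,\|u\|$.

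For general $C\in\Sg_1(\Hc)$ I would use the Schmidt decomposition $C=\sum_n\lambda_n(\cdot,f_n)e_n$ with $(e_n),(f_n)$ orthonormal and $\sum_n|\lambda_n|=\|C\|_{\Sg_1}$. The rank-one bound gives $\|\lambda_n\,b\{(\cdot,f_n)e_n\}\|\le d_\pi^{-1}|\lambda_n|\,\|b\|_\infty$, so $\sum_n\lambda_n b\{(\cdot,f_n)e_n\}$ converges absolutely in $\BB(\Hc)$ to an operator $b\{C\}$ satisfying $\|b\{C\}\|\le d_\pi^{-1}\|b\|_\infty\|C\|_{\Sg_1}$, and by linearity this sum agrees with the strong-operator integral in the statement; bilinearity is immediate. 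The main obstacle is that the integrand is not Bochner-integrable -- its $\BB(\Hc)$-norm $|b(X)|\,\|C\|_{\BB(\Hc)}$ is generally only in $L^\infty$, not $L^1$ -- so strong-operator convergence cannot be inferred from dominated convergence and must be extracted via the Cauchy argument above, which hinges on the $L^2$ integrability of matrix coefficients provided by square integrability modulo the center.
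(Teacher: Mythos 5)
Your argument is correct. The paper itself disposes of this proposition in one line, by citing \cite[Lemma 0.2, Ch. XIII]{Ta81} together with \cite[Prop. 2.8]{BB10c}; the cited lemma of Taylor is precisely the Cordes--Kato-type statement you prove, and the companion citation supplies the square-integrability input in this setting. So what you have written is, in substance, a self-contained proof of the black box the paper invokes: reduction to rank-one $C$ via the Schmidt decomposition, the Duflo--Moore orthogonality relations on $G/Z\simeq\gg_0$ (available exactly because the orbit is flat, as the paper notes in the Introduction), the Cauchy--Schwarz estimate giving $\Vert b\{C\}u\Vert\le d_\pi^{-1}\Vert b\Vert_\infty\Vert e\Vert\,\Vert f\Vert\,\Vert u\Vert$, and summation over the Schmidt series. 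Your truncation argument correctly handles the fact that the integrand is not Bochner integrable in operator norm, which is the genuine issue behind the phrase ``so-convergent.'' The only step I would ask you to make explicit is the final identification of $\sum_n\lambda_n\,b\{(\cdot,f_n)e_n\}$ with the strong-operator limit of the truncated integrals of the full $C$: for fixed $u$ and fixed $R$ the truncated integral of $b(X)\pi(X)^{-1}C\pi(X)u$ is the sum over $n$ of the truncated rank-one integrals (Fubini for Bochner integrals on the bounded set $\{|X|\le R\}$), and the uniform-in-$R$ bound $d_\pi^{-1}|\lambda_n|\,\Vert b\Vert_\infty\Vert u\Vert$ on the $n$-th term lets you interchange $R\to\infty$ with the sum. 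This is routine but is the one place where ``by linearity'' is doing slightly more work than it advertises.
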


\begin{proof}
This follows by \cite[Lemma 0.2, Ch. XIII]{Ta81} and \cite[Prop. 2.8]{BB10c}.
\end{proof}

Recall that when $a\in \Sc'(\Oc)$ and  $\phi, \psi\in \Hc_{\infty}$, we have
\begin{equation}\label{op-wig}
 \scalar{\Op(a)\phi}{\psi}= \dual{a}{\Wig(\phi, \psi)},
 \end{equation}
where the Wigner function $\Wig(\phi, \psi)\in \Sc(\Oc)$ is defined by
$$\Wig(\phi, \psi)(\xi) = \int\limits_{\gg_0} \ee^{\ie\dual{\xi}{X}} \scalar {\phi}{\pi(X)\psi}\,\de X,$$
for all $\xi\in \Oc$. 
See \cite[Cor. 2.11]{BB10c} for more details.

\begin{lemma}\label{oplemma}
Let $b\in L^\infty(\gg_0)$, and $a\in C_0(\Oc)$ be such  that $\Op(a)\in \Sg_1(\Hc)$. 
Then $\Op(b\ast a)$ extends to a bounded operator on $\Hc$, namely $\Op(b\ast a) = b\{\Op(a)\}$.
\end{lemma}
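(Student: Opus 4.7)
I will prove $\Op(b\ast a)=b\{\Op(a)\}$ by matching the sesquilinear forms of both operators on the dense subspace $\Hc_\infty\times\Hc_\infty$ of $\Hc\times\Hc$. The boundedness of $\Op(b\ast a)$ will follow automatically, since $b\{\Op(a)\}\in\BB(\Hc)$ by Proposition~\ref{Kato}.

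Fix $\phi,\psi\in\Hc_\infty$ and set $W:=\Wig(\phi,\psi)\in\Sc(\Oc)$. The strong-operator convergence asserted in Proposition~\ref{Kato} yields
$$\langle b\{\Op(a)\}\phi,\psi\rangle=\int_{\gg_0}b(X)\langle\pi(-X)\Op(a)\pi(X)\phi,\psi\rangle\,\de X.$$
The covariance identity~\eqref{covariance} replaces the conjugate by $\Op(\alpha(X)a)$, and~\eqref{op-wig} identifies the inner matrix coefficient with the distributional pairing $\langle\alpha(X)a,W\rangle$. Equivariance of the coadjoint action on $\Oc$, via a change of variable that uses the invariance of the canonical measure on the orbit, then recasts this pairing as $\langle a,W^\Delta(X,\cdot)\rangle$, with $W^\Delta$ as in Definition~\ref{convolution}. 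Interchanging the two integrations reassembles the result as $\langle b\otimes a,W^\Delta\rangle=\langle b\ast a,W\rangle=\langle\Op(b\ast a)\phi,\psi\rangle$, which finishes the proof.

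\textbf{Main obstacle.} The final Fubini step is the delicate one: since $b$ lies only in $L^\infty$, the integrand $b(X)\langle a,W^\Delta(X,\cdot)\rangle$ on $\gg_0$ is bounded but not integrable in general (the invariance of the orbit measure renders $\|W^\Delta(X,\cdot)\|_{L^1(\Oc)}$ a nonzero constant in $X$), so Fubini is not directly applicable. My remedy is truncation: fix a compact exhaustion $\{K_n\}_{n\in\NN}$ of $\gg_0$ and set $b_n:=b\chi_{K_n}\in L^1\cap L^\infty(\gg_0)$. For each $n$, Fubini applies directly on the compactly supported bounded integrand and yields $\Op(b_n\ast a)=b_n\{\Op(a)\}$. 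As $n\to\infty$, the right-hand side converges to $b\{\Op(a)\}$ in the strong operator topology by the very definition of the integral in Proposition~\ref{Kato}, so $\Op(b_n\ast a)\to b\{\Op(a)\}$ in the weak operator topology. By the continuity of $\Op\colon\Sc'(\Oc)\to\Lc(\Hc_\infty,\Hc_{-\infty})$, the limit $b\ast a:=\lim_n b_n\ast a$ exists in $\Sc'(\Oc)$ (and extends Definition~\ref{convolution} to the present situation), and the identity $\Op(b\ast a)=b\{\Op(a)\}$ follows at once.
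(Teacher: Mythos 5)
Your core computation is exactly the paper's proof run in the opposite direction: the paper starts from $\scalar{\Op(b\ast a)\phi}{\psi}=\dual{b\ast a}{\Wig(\phi,\psi)}$, unfolds Definition~\ref{convolution} into an iterated integral, and recognizes the inner integral as $\scalar{\pi(-Y)\Op(a)\pi(Y)\phi}{\psi}$ via \eqref{op-wig} and \eqref{covariance}, whence Proposition~\ref{Kato} finishes the argument. Up to the direction of travel, your first paragraph is that proof, and it is correct.

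The problem is your ``main obstacle,'' which is not actually there, and the truncation you introduce to circumvent it, which is where the real gap sits. In this lemma $a\in C_0(\Oc)$ is compactly supported (it must be for $b\ast a$ to be defined at all, since Definition~\ref{convolution} requires $a\in\Ec'(\Oc)$; and this is how the lemma is used in Theorem~\ref{mainthm1}, with $a=f_j\in C_0^m(\Oc)$ coming from Lemma~\ref{decomp-orbit}). Consequently the quantity you should be estimating is not $\norm{\Wig(\phi,\psi)^\Delta(X,\cdot)}_{L^1(\Oc)}$, which is indeed constant in $X$, but
$$\int\limits_{\gg_0}\int\limits_{\operatorname{supp}a}\vert\Wig(\phi,\psi)(\Ad^\ast_G(X)\xi)\vert\,\de\xi\,\de X
=\int\limits_{\operatorname{supp}a}\Bigl(\,\int\limits_{\gg_0}\vert\Wig(\phi,\psi)(\Ad^\ast_G(X\cdot Y_\xi)\xi_0)\vert\,\de X\Bigr)\de\xi ,$$
where $\xi=\Ad^\ast_G(Y_\xi)\xi_0$; the inner integral equals $\norm{A_{\xi_0}^\ast\Wig(\phi,\psi)}_{L^1(\gg_0)}<\infty$ by left-invariance of Lebesgue measure on $\gg_0$, so the double integral is finite and Fubini applies directly, with no truncation. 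Your remedy, besides being unnecessary, is not self-contained at its last step: you assert that ``by the continuity of $\Op$'' the limit $\lim_n b_n\ast a$ exists and may be \emph{defined} to be $b\ast a$, but $b\ast a$ is already defined by Definition~\ref{convolution}, and identifying it with $\lim_n b_n\ast a$ in $\Sc'(\Oc)$ requires precisely the dominated-convergence estimate above --- the continuity of $\Op$ transfers convergence of symbols to convergence of operators, not the other way around, and in any case cannot tell you that the limit is the convolution of Definition~\ref{convolution} rather than some other distribution. Once you note the compact support of $a$, delete the truncation and your first paragraph is a complete proof.
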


\begin{proof}
Let $\phi, \psi\in \Hc_{\infty}$. 
Then  by using \eqref{op-wig} we have
$$ 
\begin{aligned}
\scalar{\Op(b\ast a)\phi}{\psi} & = \int\limits_{\Oc} (b\ast a)(\xi) \Wig(\phi,\psi)(\xi)\, \de \xi \\
&= \int\limits_{\gg_0} \int\limits_{\Oc} b(Y) a(\Ad^\ast(-Y)\xi) \Wig(\phi, \psi) (\xi)\, \de \xi\,\de Y.
\end{aligned}
$$
Here note that 
$$ \int\limits_{\Oc} a(\Ad^\ast(-Y)\xi) \Wig(\phi, \psi) (\xi)\, \de \xi= 
\scalar{\Op(\alpha(Y)a)\phi}{\psi} = \scalar{\pi(-Y) \Op(a) \pi(Y)\phi}{\psi},$$
by the covariance equality \eqref{covariance}. 
The lemma follows now from Proposition~\ref{Kato}. 
\end{proof}

\section{Proofs of the main results}\label{section-proofs}

We recall that for proving Theorem~\ref{main_introd} it is enough to prove Theorem~\ref{mainthm1} and Theorem~\ref{mainthm2}.

\begin{proposition}\label{traceclass}
There exists $m\in \NN$ such that when $f\in \Cc_0^m(\Oc)$ the operator $\Op(f)$ is trace class. 
\end{proposition}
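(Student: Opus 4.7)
My plan is to reduce the trace-class property of $\Op(f)$, for $f\in\Cc_0^m(\Oc)$ with $m$ large, to the analogous property for a finite fixed family of base symbols produced by Lemma~\ref{decomp-orbit}, using a Schatten-class strengthening of Proposition~\ref{Kato}.

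The first step is the following refinement of Proposition~\ref{Kato}: for every $b\in L^1(\gg_0)$ and every $C\in\Sg_1(\Hc)$ the integral
\[
b\{C\}=\int_{\gg_0}b(X)\pi(X)^{-1}C\pi(X)\,\de X
\]
converges in the trace norm, with $\norm{b\{C\}}_1\le\norm{b}_{L^1(\gg_0)}\norm{C}_1$. This is immediate from the unitarity of $\pi(X)$, which forces the $\Sg_1$-norm of the integrand to equal $|b(X)|\cdot\norm{C}_1$. Combined with Lemma~\ref{oplemma}, it yields that $\Op(b\ast a)\in\Sg_1(\Hc)$ whenever $b\in L^1(\gg_0)$ and $\Op(a)\in\Sg_1(\Hc)$.

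Next I would fix a value $m_0\in\NN$ (to be specified in the last step) and apply Lemma~\ref{decomp-orbit} with this value to obtain finite families $\{u_j\}_{j\in J}\subseteq \U((\gg_0)_\CC)$ and $\{f_j\}_{j\in J}\subseteq \Cc_0^{m_0}(\Oc)$. Setting $m=m_0+\max_{j\in J}\deg u_j$, for every $f\in \Cc_0^m(\Oc)$ each coefficient $A_{\xi_0}^\ast(\de\beta(u_j)f)$ belongs to $\Cc_0^{m-\deg u_j}(\gg_0)$ and is therefore continuous and compactly supported, in particular in $L^1(\gg_0)$. Provided that each fixed $f_j$ satisfies $\Op(f_j)\in\Sg_1(\Hc)$, the refinement above then represents $\Op(f)$ as a finite sum of trace-class operators, concluding the argument.

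The main obstacle, and the heart of the matter, is to choose $m_0$ so that each fixed base symbol $f_j$ yields a trace-class operator. Since the $f_j$ come from the Bessel-type building blocks $F_j=A_{\xi_0}^\ast f_j$ furnished by \cite[Lem.~2.3]{DDJP09}, I would verify this directly via Pedersen's explicit kernel formula for the Weyl--Pedersen calculus on flat orbits (see~\cite{Pe94,Ma07}): enough joint smoothness and decay of $F_j$ (controlled by $m_0$) translates into sufficient regularity of the integral kernel of $\Op(f_j)$ to place it in the nuclear ideal. Taking $m_0$ above the threshold dictated by $\dim\Oc$ secures $\Op(f_j)\in\Sg_1(\Hc)$ for every $j\in J$ and completes the proof.
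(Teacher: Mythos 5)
Your reduction is circular at the decisive point. After applying Lemma~\ref{decomp-orbit} you must show that the fixed base symbols $f_j\in\Cc_0^{m_0}(\Oc)$ satisfy $\Op(f_j)\in\Sg_1(\Hc)$ --- but these $f_j$ are themselves just compactly supported functions of class $C^{m_0}$ coming from \cite[Lem.~2.3]{DDJP09} (they are \emph{not} Schwartz, so you cannot invoke $\Op(\Sc(\Oc))\subseteq\Sg_1(\Hc)$ for them), and so the ``base case'' of your reduction is exactly the statement of the proposition for a particular choice of $f$. The paragraph where you address this defers everything to ``enough joint smoothness and decay of $F_j$ translates into sufficient regularity of the integral kernel of $\Op(f_j)$ to place it in the nuclear ideal,'' which is an assertion, not an argument: no trace-class criterion for integral kernels is stated or verified, no estimate relates $m_0$ to $\dim\Oc$, and regularity of a kernel alone does not put an operator in $\Sg_1(\Hc)$ without a quantitative mechanism (a Sobolev-type nuclearity criterion, a factorization, or the like). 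So the analytic content of the proposition is never actually proved.

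The paper gets this content in one step, bypassing any decomposition: $\Op\colon\Sc(\Oc)\to\BB(\Hc)_\infty$ is a topological isomorphism (\cite{Pe94}) and $\BB(\Hc)_\infty$ embeds continuously into $\Sg_1(\Hc)$ (\cite{BB10b}), so there is a \emph{single} seminorm $p_{m,l}$ on $\Sc(\Oc)$, involving only $m$ derivatives and a polynomial weight of order $l$, with $\norm{\Op(\phi)}_{\Sg_1(\Hc)}\le C\,p_{m,l}(\phi)$; since $\Cc_0^m(\Oc)$ lies in the closure of $\Sc(\Oc)$ for $p_{m,l}$, the estimate extends by density and gives $\Op(f)\in\Sg_1(\Hc)$ directly. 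If you want to keep your structure, you would need to prove precisely this continuity estimate anyway in order to handle the $f_j$, at which point the detour through Lemma~\ref{decomp-orbit} and the $L^1$ strengthening of Proposition~\ref{Kato} (which is correct but does no work here) becomes superfluous. I recommend replacing the third step of your argument with the quantitative continuity of $\Op$ from $\Sc(\Oc)$ into $\Sg_1(\Hc)$ and a density argument, applied to $f$ itself.
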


\begin{proof}
Recall (see \cite{Pe94}) that the mapping $\Op\colon \Sc(\Oc) \to \BB(\Hc)_{\infty}$ is an isomorphism. 
On the other hand, $\BB(\Hc)_\infty$ is continuously embedded in $\Sg_1(\Hc)$ 
(see \cite{BB10b}). 
Thus there exist $m, l\in \NN$ and $C>0$ such that 
$$\norm{\Op(\phi)}_{\Sg_1(\Hc)}\le C p_{m, l} (\phi),$$
for every $\phi\in \Sc(\Oc)$, 
where $p_{m, l}$ is the seminorm on $\Sc(\Oc)$ that corresponds, via the global chart $P$, to the seminorm
$\phi\mapsto \sup|(1+|\xi|^2)^{l/2}(1-\Delta)^{m/2} \phi(\xi)| $. 
Since $\Cc^m_0(\Oc)$ is contained in the closure of $\Sc(\Oc)$ in this seminorm, we get that  
$\Op(f)$ can be approximated in the $\Sg_1(\Hc)$ norm  by a sequence of trace class operators, whenever 
$f\in \Cc^m_0(\Oc)$.
This proves that $\Op(f)\in \Sg_1(\Hc)$.
\end{proof}

\begin{proof}[Proof of Theorem~\ref{mainthm1}]
First use Proposition~\ref{traceclass} to find $m\in \NN$ 
large enough such that $\Op(f)$ is trace class whenever $f\in \Cc_0^{m}(\Oc)$. 
Then by Lemma~\ref{decomp-orbit} there exist finite families $\{u_j\}_{j\in J}$ and $\{f_j\}_{j\in J}$
such that for every $a\in \Sc'(\Oc)$ one has 
$$ a  = \sum\limits_{j\in J} A_{\xi_0}^\ast (\de \beta(u_j) a) \ast f_j.$$ 
Assume now that $a\in \Cc_b^\infty(\Oc)$. 
Then $b_j= A_{\xi_0}^{\ast} (\de \beta(u_j) a) \in L^\infty(\gg_0)$ (see Remarks~\ref{intertw} and \ref{topology_1}).
It follows by Lemma~\ref{oplemma} and Proposition~\ref{Kato} that 
$$ \Op(a) =\sum\limits_{j\in J} b_j\{\Op(f_j)\}, $$
$\Op(a)$ is bounded, and there are constants $C, C'>0$ such that 
$$ \norm{\Op(a)}_{\BB(\Hc)} \le C \sum\limits_{j\in J} \norm{b_j}_{L^\infty(\gg_0)} \norm{\Op(f_j)}_{\Sg_1(\Hc)}
\le C' \sum\limits_{j\in J} \norm{\de \beta_j(u_j) a}_{L^\infty(\Oc)}.$$
This, together with Remark~\ref{intertw}, concludes the proof.
\end{proof}

In order to prove Theorem~\ref{mainthm2} we need a lemma. 

\begin{lemma}\label{Bony}
Let $B$ be a Banach space of distributions continuously embedded into $\Sc'(\Oc)$, and such that
$\alpha(X) a\in B$ and $\norm{\alpha(X)a}_{B}=\norm{a}_{B}$  whenever $a\in B$  and $X\in \gg_0$.
Then for every $u\in U(( \gg_0)_{\CC})$ there is a constant $C$ and a finite family $\{u_j\}_{j\in J}$ such that 
$$ \norm{\de\beta(u) a}_{L^\infty} \le C\sum\limits_{j\in J} \norm{\de\beta(u_j) a}_{B}$$
for every $a\in B$ with  $\de\beta(u') a\in B$, for all $ u'\in U(( \gg_0)_{\CC})$. 
\end{lemma}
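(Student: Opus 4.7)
The plan is to combine the decomposition of distributions provided by Lemma~\ref{decomp-orbit} with the $\alpha$-invariance of the $B$-norm in order to reduce the $L^\infty$-estimate on $\Oc$ to a single distributional pairing at the base point $\xi_0$. For every $\xi\in\Oc$ one can write $\xi=A_{\xi_0}(X_\xi)=\Ad_G^\ast(X_\xi)\xi_0$ with $X_\xi\in\gg_0$; from the definition of $\alpha$ we then have $c(\xi)=(\alpha(-X_\xi)c)(\xi_0)$ for every continuous function $c$ on $\Oc$. Applying this to $c=\de\beta(u)a$ and using the commutation~\eqref{commutation} gives
$$(\de\beta(u) a)(\xi)=\bigl(\de\beta(u)(\alpha(-X_\xi) a)\bigr)(\xi_0).$$
Since $\alpha(-X_\xi)$ commutes with every $\de\beta(u')$ and acts as an isometry on $B$, the $B$-norm of $\de\beta(u')(\alpha(-X_\xi) a)$ equals $\norm{\de\beta(u')a}_B$ for every $u'\in U((\gg_0)_{\CC})$. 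Consequently it suffices to bound $|(\de\beta(u) a)(\xi_0)|$ by $C\sum_j\norm{\de\beta(u_j)a}_B$ with constants that do not depend on $\xi$.

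Next I would apply Lemma~\ref{decomp-orbit} to the distribution $\de\beta(u)a\in B$ and use the intertwining diagram~\eqref{diagram} to produce finite families $\{w_j\}_{j\in J}\subset U((\gg_0)_{\CC})$ and $\{f_j\}_{j\in J}\subset C_0^\infty(\Oc)$ with
$$\de\beta(u) a=\sum_{j\in J} A_{\xi_0}^\ast(\de\beta(w_j u)a)\ast f_j.$$
Each summand is a convolution of a tempered distribution on $\gg_0$ with a compactly supported smooth function on $\Oc$, hence a smooth function on $\Oc$, so pointwise evaluation at $\xi_0$ is legitimate. Unwinding Definition~\ref{convolution} together with Remark~\ref{conv-orbit}, and using that $A_{\xi_0}$ is a volume-preserving polynomial diffeomorphism (Lebesgue measures match on $\gg_0$ and $\Oc\simeq\xi_0+\zg^\perp$ for flat orbits), the evaluation at $\xi_0$ takes the form of a distributional pairing
$$\bigl(A_{\xi_0}^\ast(\de\beta(w_j u) a)\ast f_j\bigr)(\xi_0)=\dual{\de\beta(w_j u)a}{\tilde f_j}_{\Oc},$$
where $\tilde f_j(\eta):=f_j(\Ad_G^\ast(-A_{\xi_0}^{-1}(\eta))\xi_0)$ belongs to $C_0^\infty(\Oc)\subset\Sc(\Oc)$ because the map $\eta\mapsto\Ad_G^\ast(-A_{\xi_0}^{-1}(\eta))\xi_0$ is a proper polynomial diffeomorphism of $\Oc$.

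Finally, the continuous embedding $B\hookrightarrow\Sc'(\Oc)$ furnishes, for each Schwartz test function $\tilde f_j$, a constant $C_j>0$ with $|\dual{c}{\tilde f_j}|\le C_j\norm{c}_B$ for all $c\in B$; summing over $j$ and combining with the $\alpha$-translation reduction from the first paragraph yields the lemma with $u_j:=w_j u$ and $C:=\sum_j C_j$. The main obstacle is securing the $C_0^\infty$ regularity of the $f_j$ in the decomposition step: Lemma~\ref{decomp-orbit} as stated produces only $C_0^m$ functions, but its proof via \cite{DDJP09} rests on a decomposition $\delta=\sum_j\tilde u_j\ast F_j$ of the Dirac mass at $0$, and a standard mollifier argument shows that the $F_j$ (hence the $f_j$) can in fact be chosen in $C_0^\infty$. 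Absent this refinement, one would instead have to check directly that pairing $c\in B$ with the merely $C_0^m$ functions $\tilde f_j$ is still controlled by $\norm{c}_B$, which is possible because continuity of $B\to\Sc'(\Oc)$ in the strong topology allows $c\in B$ to be paired with any fixed compactly supported $C^m$ function of sufficiently high order.
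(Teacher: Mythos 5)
Your argument is correct and follows essentially the same route as the paper's proof: decompose via Lemma~\ref{decomp-orbit}, rewrite the pointwise evaluation of each convolution as a pairing of the $\alpha$-translate of $\de\beta(u_j)a$ against a compactly supported test function, and control that pairing by the $B$-norm using the continuous embedding $B\hookrightarrow\Sc'(\Oc)$ together with the $\alpha$-isometry. The $C_0^m$ issue you flag at the end is resolved exactly as in your fallback option (and as in the paper): one first extracts the order $m$ from the embedding estimate and only then invokes Lemma~\ref{decomp-orbit} for that $m$, so no mollification of the $f_j$ is needed.
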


\begin{proof}
The unit ball in $B$ is bounded in $\Sc'(\Oc)$,
therefore  there exist $m, k\in \NN$  and   $C>0$ such that for every 
$a\in B$ and $\varphi\in \Sc(\Oc)$  
\begin{equation}\label{Bony_1}
 \vert \dual{a}{\varphi}\vert \le C \norm{a}_{B} p_{m, k} (\varphi), 
\end{equation}
where $p_{m, k}(\varphi)$ is a seminorm on $\Sc(\Oc)$ as in the proof of Proposition~\ref{traceclass}. 
It follows that $a$ extends to functions in $\Cc_0^m(\Oc)$ and,  when $K$ is a compact subset of $\Oc$, there is $C=C(K)>0$ such that
\begin{equation}\label{Bony_2}
 \vert \dual{a}{\varphi}\vert \le C \norm{a}_{B} \norm{\varphi}_{\Cc^m(K)}, 
\end{equation}
whenever $\varphi\in \Cc_0^\infty(\Oc)$ has support in $K$.

For $f\in \Cc_0^m(\Oc)$,
let  $\check{f}\in \Cc_0^m(\Oc)$  be the  function defined by 
$$\check{f}(\Ad^\ast(X)\xi_0)= f(\Ad^\ast(-X)\xi_0).$$
Then note that when $a\in \Sc'(\Oc)$ and $f\in \Cc_0^\infty(\Oc)$  one can write
$$
((A_{\xi_0}^\ast a)\ast f)(\Ad^\ast(X)\xi_0)= \dual{A_{\xi_0}^\ast a(X\cdot (\cdot))}{\check{f}(\Ad^\ast(\cdot)\xi_0)}
= \dual{\alpha(-X)a}{\check{f}}
$$
for every $X\in \gg_0$. 
When $a\in B$, since $\alpha(-X)a\in B$, the above equality extends to $f\in\Cc_0^m(\Oc)$, by \eqref{Bony_2}.
It follows that if  $K$ is a compact in $\Oc$, there is $C=C(K)>0$ such that  for every $a\in B$ and $X\in \gg_0$, 
\begin{equation}\label{Bony_3} 
\vert ((A_{\xi_0}^\ast a)\ast f)(\Ad^\ast(X)\xi_0)\vert= 
\vert \dual{\alpha(-X)a}{\check{f}}\vert\le C \norm{a}_{B} \norm{\check{f}}_{C^m(K)},
\end{equation}
whenever $f\in \Cc_0^\infty(\Oc)$ has support in $K$.

We now use Lemma~\ref{decomp-orbit} for $m$ above. 
Thus, for some finite families $\{u_j\}_{j\in J}$ in 
$U((\gg_0)_{\CC})$ and $\{f_j\}_{j\in J}$ in $C_0^m(\Oc)$, independent on $a\in B$, we may write
$$a(\Ad^\ast(X)\xi_0)= \sum\limits_{j\in J}( (A_{\xi_0}^\ast(\de \beta(u_j) a))\ast f_j)(\Ad^\ast(X)\xi_0).
$$
From \eqref{Bony_3} we get 
$$\norm{a}_{L^\infty} \le C\sum\limits_{j\in J} \norm{\de\beta(u_j)a}_B,  
$$
where $C$ is independent on $a$. 

To get the estimates for $\de\beta(u) a$ in the statement, we use the above inequality for $a$ replaced by $\de\beta(u)a$. 
This concludes the proof of the lemma.
\end{proof}

\begin{proof}[Proof of Theorem~\ref{mainthm2}]
We use the above Lemma~\ref{Bony} for the  Banach space 
$$ B=\{a\in \Sc'(\Oc) \mid \Op(a)\in \BB(\Hc)\}$$
with the norm $\norm{a}_{B}=\norm{\Op(a)}_{\BB(\Hc)}$. 
 
It remains to notice that when $a\in B$, and $X\in \gg_0$, $\alpha(X)a\in B$ and has the same norm as $a$,
since  $\Op(\alpha(X)a) =\pi(X) \Op(a)\pi(-X)$ by~\eqref{covariance}.
\end{proof}

\begin{lemma}\label{Bony_Lp}
If $p\in[1,\infty)$ and $f\in\Cc_0^m(\Oc)$, then there exists a constant $M>0$ 
such that for every $c\in\Sc'(\Oc)$  
we have  $\Vert A_{\xi_0}^*(c)\ast f\Vert_{L^p(\Oc)}\le M\Vert \Op(c)\Vert_{\Sg_p(\Hc)}$. 
\end{lemma}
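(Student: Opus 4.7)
The plan is to represent the function $A_{\xi_0}^\ast(c)\ast f$ at each point of $\Oc$ as a trace pairing of $\Op(c)$ against a conjugate of a fixed trace-class operator, and then to interpolate between the endpoints $p=1$ and $p=\infty$.

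First I would choose $m$ large enough so that Proposition~\ref{traceclass} ensures $\Op(\check f)\in\Sg_1(\Hc)$, where $\check{\phantom{f}}$ is the involution used in the proof of Lemma~\ref{Bony}. Combining the identity
\[
 ((A_{\xi_0}^\ast c)\ast f)(\Ad^\ast_G(X)\xi_0)=\dual{\alpha(-X)c}{\check f}
\]
from that proof with the Plancherel identity for the Weyl-Pedersen calculus (the isometry $\Op\colon L^2(\Oc)\to\Sg_2(\Hc)$, extended by continuity to the duality between $\Sc'(\Oc)$ and $\Sc(\Oc)$), I would rewrite
\[
 \dual{\alpha(-X)c}{\check f}=\Tr\bigl(\Op(\alpha(-X)c)\,T\bigr)=\Tr\bigl(\pi(X)\Op(c)\pi(-X)\,T\bigr),
\]
where $T\in\Sg_1(\Hc)$ depends only on $f$ (up to conjugation conventions) and the second equality uses the covariance~\eqref{covariance}.

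Given this trace representation, the two endpoint estimates are direct. For $p=\infty$, unitarity of $\pi$ and H\"older for Schatten ideals give $|\Tr(\pi(X)\Op(c)\pi(-X)T)|\le\norm{\Op(c)}_{\BB(\Hc)}\norm{T}_{\Sg_1}$, hence $\norm{A_{\xi_0}^\ast(c)\ast f}_{L^\infty(\Oc)}\le M\norm{\Op(c)}_{\BB(\Hc)}$. For $p=1$, I would dualize against $L^\infty(\gg_0)$: for $\psi\in L^\infty(\gg_0)$, cyclicity of the trace and Fubini (justified by $T\in\Sg_1$ together with Proposition~\ref{Kato}) give
\[
 \int_{\gg_0}\psi(X)\Tr\bigl(\pi(X)\Op(c)\pi(-X)T\bigr)\,\de X=\Tr\bigl(\Op(c)\,S_\psi\bigr),
\]
where $S_\psi=\int_{\gg_0}\psi(X)\pi(-X)T\pi(X)\,\de X$ satisfies $\norm{S_\psi}_{\BB(\Hc)}\le\norm{\psi}_{L^\infty}\norm{T}_{\Sg_1}$ by Proposition~\ref{Kato}. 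Trace-class/bounded duality then bounds the previous display by $\norm{\Op(c)}_{\Sg_1}\norm{\psi}_{L^\infty}\norm{T}_{\Sg_1}$, and the supremum over $\norm{\psi}_{L^\infty}\le 1$ yields $\norm{A_{\xi_0}^\ast(c)\ast f}_{L^1(\Oc)}\le M\norm{\Op(c)}_{\Sg_1}$.

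For $p\in(1,\infty)$ I would invoke complex (Calder\'on) interpolation of the linear map $\Op(c)\mapsto A_{\xi_0}^\ast(c)\ast f$ between the two endpoint bounds, using the standard identifications $(\Sg_1,\Sg_\infty)_{[\theta]}=\Sg_{p_\theta}$ and $(L^1(\Oc),L^\infty(\Oc))_{[\theta]}=L^{p_\theta}(\Oc)$ with $p_\theta=1/(1-\theta)$. The main delicate point I expect is justifying the trace representation of the distributional pairing $\dual{\alpha(-X)c}{\check f}$ when $\Op(c)$ is only known to lie in some $\Sg_p(\Hc)$ and $\check f$ is merely $\Cc_0^m$; this should be handled by approximating $\check f$ in $\Sc(\Oc)$ in the topology controlling the $\Sg_1$-norm of $\Op(\cdot)$ provided by the proof of Proposition~\ref{traceclass}, so that the identity passes to the limit by continuity of $\Op$ and of the distributional pairing.
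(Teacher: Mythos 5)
Your proposal is correct, and it reaches the estimate by a recognizably different route from the paper's. Both arguments share the same algebraic core: the identity $((A_{\xi_0}^\ast c)\ast f)(\Ad^\ast_G(X)\xi_0)=\dual{\alpha(-X)c}{\check f}$, the covariance relation \eqref{covariance}, the fact that $\Op(\check f)\in\Sg_1(\Hc)$ for $m$ as in Proposition~\ref{traceclass}, and the averaging map $b\mapsto b\{C\}$ of Proposition~\ref{Kato}. Where you diverge is in how the $\Sg_p\to L^p$ bound is actually produced: the paper dualizes against $\varphi\in L^q(\Oc)$, rewrites $\int_{\Oc}(A_{\xi_0}^\ast(c)\ast f)\varphi=\Tr\bigl(\Op(c)\,\Op(A_{\xi_0}^\ast(\varphi)\ast\check f)\bigr)$ via \cite[Th.\ 4.1.4]{Pe94} and Lemma~\ref{oplemma}, and then invokes \cite[Lemma 3.3]{Ar08} to bound $\Vert A_{\xi_0}^\ast(\varphi)\{\Op(\check f)\}\Vert_{\Sg_q}$ by $\Vert\varphi\Vert_{L^q}$; you instead prove the primal estimate directly, establishing the two endpoint bounds $\BB(\Hc)\to L^\infty$ and $\Sg_1\to L^1$ for the map $A\mapsto\Tr(\pi(\cdot)A\pi(-\cdot)\Op(\check f))$ and applying complex interpolation. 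The two are essentially dual formulations of the same Cordes--Kato mechanism (Arsu's lemma is itself proved by such an interpolation), so what your version buys is self-containedness --- no external Schatten-class lemma is cited --- at the cost of carrying out the interpolation and the attendant identifications $(\Sg_1,\BB(\Hc))_{[\theta]}=\Sg_{p_\theta}$, $(L^1,L^\infty)_{[\theta]}=L^{p_\theta}$ explicitly; the paper's version is shorter because the interpolation is outsourced. Your closing caveat about extending the trace representation from $\varphi\in\Sc(\Oc)$ to $\check f\in\Cc_0^m(\Oc)$ and from nice $c$ to all $c$ with $\Op(c)\in\Sg_p$ is exactly the density argument the paper also relies on (density of $\Cc_0^\infty(\Oc)$ in $\Sc(\Oc)$ and of $\Sc(\Oc)$ in the Banach space $B$ normed by $\Vert\Op(\cdot)\Vert_{\Sg_p}$, together with the seminorm control from Proposition~\ref{traceclass}), so it is handled the same way and poses no obstruction.
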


\begin{proof}
First note that $\Cc_0^\infty(\Oc)$ is dense $\Sc(\Oc)$, 
which in turn is dense in the Banach space 
$B:=\{c\in\Sc'(\Oc)\mid\Vert c\Vert_B:=\Vert \Op(c)\Vert_{\Sg_p(\Hc)}<\infty\}$. 
It then follows that it suffices to obtain an estimate of the form 
\begin{equation}\label{Bony_Lp_eq1}
(\forall c,\varphi\in\Cc_0^\infty(\Oc))\quad \Bigl\vert\int\limits_{\Oc}(A_{\xi_0}^*(c)\ast f)\varphi\Bigr\vert\le M \Vert \Op(c)\Vert_{\Sg_p(\Hc)}
\Vert\varphi\Vert_{L^q(\Oc)}
\end{equation}
where the constant $M>0$ depends only on $p$ and $f$, 
where $\frac{1}{p}+\frac{1}{q}=1$. 
By using Remark~\ref{conv-orbit} we get  
$$\begin{aligned}
\int\limits_{\Oc}(A_{\xi_0}^*(c)\ast f)\varphi
&=\int\limits_{\gg_e}A_{\xi_0}^*((A_{\xi_0}^*(c)\ast f)\varphi) \\
&=\int\limits_{\gg_e}((A_{\xi_0}^*(c)\ast A_{\xi_0}^*(f))A_{\xi_0}^*(\varphi) \\
&=\iint\limits_{\gg_e\times\gg_e}(A_{\xi_0}^*(c))(Y)(A_{\xi_0}^*(f))((-Y)\ast X)(A_{\xi_0}^*(\varphi))(X)\de X\de Y \\
&=\int\limits_{\gg_e}A_{\xi_0}^*(c) (A_{\xi_0}^*(\varphi)\ast A_{\xi_0}^*(\check{f})) \\
&=\int\limits_{\Oc}c (A_{\xi_0}^*(\varphi)\ast \check{f}).
\end{aligned} $$
Furthermore, by using \cite[Th. 4.1.4]{Pe94} we get 
$$\begin{aligned}
\Bigl\vert\int\limits_{\Oc}(A_{\xi_0}^*(c)\ast f)\varphi\Bigr\vert
&=\vert\Tr(\Op (c) \Op(A_{\xi_0}^*(\varphi)\ast \check{f}))\vert \\
&\le\Vert \Op(c)\Vert_{\Sg_p(\Hc)} \Vert \Op(A_{\xi_0}^*(\varphi)\ast \check{f}))\Vert_{\Sg_q(\Hc)} \\
&=\Vert \Op(c)\Vert_{\Sg_p(\Hc)} \Vert A_{\xi_0}^*(\varphi)\Op\{\check{f})\}\Vert_{\Sg_q(\Hc)}, 
\end{aligned} $$
where the latter equality follows by Lemma~\ref{oplemma}. 
Now, by using \cite[Lemma 3.3]{Ar08} we get \eqref{Bony_Lp_eq1}, 
and this completes the proof. 
\end{proof}

By using the above lemma and the $L^p$ version of Proposition~\ref{Kato} obtained by the similar reasoning and 
\cite[Lemma 3.3]{Ar08}, we get the following theorem.

\begin{theorem}\label{main_Schatten}
Let $G$ be a connected, simply connected, nilpotent Lie group whose generic coadjoint orbits are flat. 
Let $\Oc$ be such an orbit with a corresponding unitary irreducible representation $\pi\colon G\to\BB(\Hc)$, 
and $p\in[1,\infty)$.  
Then the following assertions are equivalent for $a\in C^\infty(\Oc)$: 
\begin{enumerate}
\item For every $D\in\Diff(\Oc)$ we have $Da\in L^p(\Oc)$. 
\item For every $D\in\Diff(\Oc)$ we have $\Op(Da)\in\Sg_p(\Hc)$. 
\end{enumerate}
Moreover, if we denote by $\Cc^{\infty, p}(\Oc)$ the space of all symbols satisfying the above conditions, 
then the Weyl-Pedersen calculus defines a continuous linear map 
$$\Op\colon \Cc^{\infty,p}(\Oc)\to\Sg_p(\Hc)$$
if $\Cc^{\infty, p}(\Oc)$ carries the Fr\'echet topology that can be defined by any of 
the families of seminorms $\{a\mapsto\Vert Da\Vert_{L^\infty(\Oc)}\}_{D\in\Diff(\Oc)}$ and $\{a\mapsto\norm{\Op(Da)}_{\Sg_p(\Hc)}\}_{D\in\Diff(\Oc)}$. 
\end{theorem}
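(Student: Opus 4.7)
The proof will follow the blueprint of Theorems~\ref{mainthm1} and~\ref{mainthm2}, with $L^\infty(\Oc)$ replaced by $L^p(\Oc)$ and $\BB(\Hc)$ replaced by $\Sg_p(\Hc)$ throughout. The two key ingredients already stand ready: Lemma~\ref{Bony_Lp} will play the role of the pointwise estimate \eqref{Bony_3}, and an $L^p$-analogue of Proposition~\ref{Kato}, namely the boundedness of the bilinear map
\begin{equation*}
(b,C)\mapsto b\{C\}\colon L^p(\gg_0)\times\Sg_1(\Hc)\to\Sg_p(\Hc),
\end{equation*}
will replace Proposition~\ref{Kato}. The latter is obtained by the same vector-valued integration as in \cite[Lemma~0.2,~Ch.~XIII]{Ta81} combined with a Schatten-norm bound in the spirit of \cite[Lemma~3.3]{Ar08}, exploiting the unitary invariance $\|\pi(X)^{-1}C\pi(X)\|_{\Sg_1}=\|C\|_{\Sg_1}$.

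For $(1)\Rightarrow(2)$ and the continuity of $\Op\colon\Cc^{\infty,p}(\Oc)\to\Sg_p(\Hc)$, I would fix $m\in\NN$ large enough by Proposition~\ref{traceclass} so that $\Op(f)\in\Sg_1(\Hc)$ whenever $f\in\Cc_0^m(\Oc)$. Applying Lemma~\ref{decomp-orbit} to $\de\beta(u)a$ for an arbitrary $u\in U((\gg_0)_\CC)$ yields
\begin{equation*}
\de\beta(u)a=\sum_{j\in J}A^*_{\xi_0}(\de\beta(u_ju)a)\ast f_j.
\end{equation*}
Under assumption~(1) combined with Remark~\ref{alg}, each $b_j:=A^*_{\xi_0}(\de\beta(u_ju)a)$ lies in $L^p(\gg_0)$; the $L^p$-analogue of Lemma~\ref{oplemma} then gives $\Op(b_j\ast f_j)=b_j\{\Op(f_j)\}$, and the $L^p$-Kato bound produces a constant $C>0$ with
\begin{equation*}
\|\Op(\de\beta(u)a)\|_{\Sg_p(\Hc)}\le C\sum_{j\in J}\|\de\beta(u_ju)a\|_{L^p(\Oc)},
\end{equation*}
which simultaneously establishes $(1)\Rightarrow(2)$ and one half of the topology equivalence. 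For the converse, I would apply Lemma~\ref{decomp-orbit} in the same way to $\de\beta(u)a$ and estimate each summand by Lemma~\ref{Bony_Lp} with $c=\de\beta(u_ju)a$, obtaining
\begin{equation*}
\|\de\beta(u)a\|_{L^p(\Oc)}\le M\sum_{j\in J}\|\Op(\de\beta(u_ju)a)\|_{\Sg_p(\Hc)},
\end{equation*}
which both delivers $(2)\Rightarrow(1)$ and matches the remaining half of the topology equivalence, once Remark~\ref{alg} is invoked to identify $\{\de\beta(u)\}_{u\in U((\gg_0)_\CC)}$ with $\Diff(\Oc)$.

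The main technical obstacle I anticipate is the clean formulation and verification of the $L^p$-version of Proposition~\ref{Kato}. For $p=\infty$ the estimate $\|b\{C\}\|_{\BB(\Hc)}\le\|b\|_{L^\infty}\|C\|_{\Sg_1}$ falls out of Bochner integration and the ideal property of $\Sg_1$; for finite $p$ one instead argues by duality, pairing $b\{C\}$ with $T\in\Sg_q(\Hc)$ where $1/p+1/q=1$, and controlling $\int b(X)\,\Tr(T\pi(-X)C\pi(X))\,\de X$ via an $L^q$-type bound on the scalar coefficient function $X\mapsto\Tr(T\pi(-X)C\pi(X))$, which is precisely the content abstracted from \cite[Lemma~3.3]{Ar08}. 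A secondary point to keep track of is that the pullback $A^*_{\xi_0}$ intertwines $L^p(\Oc)$ and $L^p(\gg_0)$ on the functions $\de\beta(u)a$ that actually appear; this is controlled by the polynomial-diffeomorphism property of $A_{\xi_0}$ and is already implicit in the way Lemma~\ref{Bony_Lp} is stated.
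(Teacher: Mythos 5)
Your proposal is correct and follows essentially the same route as the paper, whose own ``proof'' of Theorem~\ref{main_Schatten} is a single sentence invoking exactly the two ingredients you identify: Lemma~\ref{Bony_Lp} for the direction $(2)\Rightarrow(1)$ and the $L^p$-version of Proposition~\ref{Kato} (obtained via \cite[Lemma 3.3]{Ar08}) for $(1)\Rightarrow(2)$, plugged into the decomposition of Lemma~\ref{decomp-orbit} as in the proofs of Theorems~\ref{mainthm1} and~\ref{mainthm2}. You in fact supply more detail than the paper does, including the duality argument for the $L^p$-Kato bound and the (correctly flagged) point that $A_{\xi_0}^*$ must intertwine the $L^p$-norms, which rests on the constancy of the Jacobian of the polynomial diffeomorphism $A_{\xi_0}$ rather than on its being a diffeomorphism alone.
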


\section{Invariant differential operators on coadjoint orbits}\label{computations}

In this section we compute invariant differential operators on a flat coadjoint orbit. 
We first make some general considerations centred on the idea of affine coadjoint action, and
then we illustrate the method by two examples of 5-dimensional Lie algebras. 
 
\begin{remark}
\normalfont
Define 
$$\omega\colon\gg_0\times\gg_0\to\RR,\quad \omega(X,Y)=\langle \xi_0,[X,Y]_{\gg}\rangle$$
and 
$$\chi:=P\circ A_{\xi_0}\colon \gg_0\to\gg_0^*,\quad \chi(X)=(\Ad_G^*(X)\xi_0)\vert_{\gg_0}.$$
Then it is straightforward to check the following properties: 
\begin{enumerate}
\item The functional $\omega$ gives a symplectic structure on the Lie algebra $\gg_0$.  
That is, it is a nondegenerate scalar 2-cocycle on $\gg_0$, 
in the sense that it is skew-symmetric, the linear mapping 
$$\omega^\#\colon\gg_0\to\gg_0^*, \quad \omega^\#(X)=\omega(\cdot,X)$$
is invertible, and 
$$(\forall X,Y,Z\in\gg_0)\quad \omega(X,[Y,Z])+\omega(Y,[Z,X])+\omega(Z,[X,Y])=0. $$
Since $\omega$ is skew-symmetric, the above property is equivalent to the fact that 
$$(\forall Y,Z\in\gg_0)\quad  \omega^\#([Y,Z])=(\ad_{\gg_0}^*Y)\omega^\#(Z)-(\ad_{\gg_0}^*Z)\omega^\#(Y),$$
hence the function $\omega^\#$ 
is a 1-cocycle of the Lie algebra $\gg_0$ 
with values in its coadjoint representation. 
\item The function $\chi$ is a smooth 1-cocycle of the Lie group $G_0$ 
with values in its coadjoint representation, 
which means that 
$$(\forall X,Y\in\gg_0)\quad \chi(X\cdot Y)=\chi(X)+\Ad_{G_0}^*(X)\chi(Y). $$
\end{enumerate}
To see the relation between the properties of $\chi$ and $\omega$,  
just note that $\chi'_0=\omega^\#$ and use the connection between 
the Lie group cohomology and the Lie algebra cohomology; 
see for instance \cite[Th. 10.1]{CE48} or~\cite[App. B]{Ne04}. 
\qed
\end{remark}

Note that all of the associative algebras of linear differential operators 
$$\Diff(\Oc),\  \Diff(\gg_0^*),\ \Diff(G_0) $$
are mutually isomorphic and are also isomorphic to the universal enveloping algebra $\U((\gg_0)_{\CC})$, 
by Remark~\ref{alg}. 
We now proceed to developing the ingredients for  
a description of the differential operators in~$\Diff(\gg_0^*)$. 
Recall that these are nothing than the operators in $\Diff(\Oc)$ viewed in 
the global chart $P\colon\Oc\to\gg_0^*$, $P(\xi)=\xi\vert_{\gg_0}$. 

\begin{definition}\label{aff}
\normalfont
The \emph{affine coadjoint action of $G_0$} is the mapping
$$\gamma\colon G_0\times\gg_0^*\to\gg_0^*,\quad 
(X,\eta)\mapsto\gamma(X)\eta:=\Ad_{G_0}^*(X)\eta+\chi(X), $$
while the mapping 
$$\dot\gamma_\omega\colon \gg_0\times\gg_0^*\to\gg_0^*,\quad 
(X,\eta)\mapsto\dot\gamma_\omega(X)(\eta):=\ad_{G_0}^*(X)\eta+\omega^\#(X) $$
is called the \emph{affine coadjoint action of $\gg_0$}.

Moreover for every $f\in C^\infty(\gg_0^*)$ and $\eta\in\gg_0^*$ 
we define $\nabla_\eta f\in\gg_0$ to be such that $f'_\eta=\langle\cdot,\nabla_\eta f\rangle_{\gg_0^*,\gg_0}\colon\gg_0^*\to\RR$, 
where $\langle \cdot,\cdot\rangle_{\gg_0^*,\gg_0}\colon\gg_0^*\times\gg_0\to\RR$ 
denotes the duality pairing.
Then for every $X\in\gg_0$ we define the linear differential operator 
$\gamma_{\omega}(X)\colon C^\infty(\gg_0^*)\to C^\infty(\gg_0^*)$ 
that acts on arbitrary $f\in C^\infty(\gg_0^*)$ by 
$$(\forall\eta\in\gg_0^*)\quad 
(\gamma_{\omega}(X)f)(\eta)
=\langle f'_\eta,\dot\gamma_\omega(X)(\eta)\rangle_{\gg_0^*,\gg_0}.$$
Hence $\gamma_{\omega}(X)$ is just the first-order differential operator 
corresponding to the fundamental vector field generated by $X$ in the infinitesimal affine coadjoint action of~$\gg_0$. 
\qed
\end{definition}

\begin{remark}\label{expl}
\normalfont
Note that for all $X\in\gg_0$ and  $f\in C^\infty(\gg_0^*)$ we have 
$$(\forall\eta\in\gg_0^*)\quad 
(\gamma_{\omega}(X)f)(\eta)
=\omega(X,\nabla_\eta f)-\langle \eta,[X,\nabla_\eta f]\rangle_{\gg_0^*,\gg_0}$$
and we thus see that the coefficients of $\gamma_{\omega}(X)$ are polynomials of degree at most~1. 
More precisely, if we denote by $\partial_1,\dots,\partial_d$ the partial derivatives in $ C^\infty(\gg_0^*)$ 
with respect to the coordinates 
$$(\eta_1,\dots,\eta_d)=(\langle\eta,X_1\rangle_{\gg_0^*,\gg_0},\dots,\langle\eta,X_d\rangle_{\gg_0^*,\gg_0})$$
defined by the basis of $\gg_0^*$ which is 
dual to the basis $X_1,\dots,X_d$ of~$\gg_0$, 
then we have for arbitrary $f\in C^\infty(\gg_0^*)$,
$$(\forall\eta\in\gg_0^*)\quad \nabla_\eta f=(\partial_1 f)(\eta)X_1+\cdots+(\partial_d f)(\eta)X_d $$
hence 
$$(\forall\eta\in\gg_0^*)\quad 
(\gamma_{\omega}(X)f)(\eta)
=\sum_{j=1}^d\omega(X,X_j)(\partial_j f)(\eta) - 
\sum_{j=1}^d\langle \eta,[X,X_j]\rangle_{\gg_0^*,\gg_0}(\partial_j f)(\eta).$$ 
Now let us define the linear functionals $c_{jk}\colon\gg_0\to\RR$ for $1\le k<j\le d$ such that 
$$(\forall j\in\{1,\dots,d\})(\forall X\in\gg_0)\quad 
[X,X_j]=\sum_{1\le k<j}c_{jk}(X)X_k.$$
Then 
$$(\forall X\in\gg_0)\quad \gamma_{\omega}(X)=\sum_{j=1}^d\omega(X,X_j)\partial_j -
\sum_{1\le k<j}c_{jk}(X)\eta_k\partial_j.$$
\qed
\end{remark}

We now motivate the terminology used in Definition~\ref{aff}.

\begin{proposition}\label{aff_prop}
For every $X\in\gg_0$ the diagram 
$$ \xymatrix{\Oc \ar[r]^{\Ad_G^*(X)} \ar[d]_{P}& \Oc \ar[d]^{P}\\
\gg_0^* \ar[r]^{\gamma(X)}& \gg_0^*}$$
is commutative. 
\end{proposition}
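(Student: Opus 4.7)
The plan is to reduce the diagram's commutativity to a pointwise identity on $\gg_0$ and verify it using only the specific structure of $\Oc$ together with the identity $\Ad_G(X)Y - \Ad(X)Y \in \zg$ recalled at the beginning of Section~\ref{section2}.

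First I would fix $\xi\in\Oc$ and pair both sides of the desired identity with an arbitrary $Z\in\gg_0$, which unfolds the left-hand side into
$$\langle P(\Ad_G^*(X)\xi), Z\rangle = \langle \xi, \Ad_G(-X)Z\rangle.$$
The central computational step is the decomposition
$$\Ad_G(-X)Z = \Ad_{G_0}(-X)Z + c(X,Z)\,X_0,$$
which is legitimate because the two actions differ by a $\zg$-valued quantity and $\zg=\RR X_0$. Since $\xi\in\Oc$ is characterized by $\langle\xi,X_0\rangle=1$, applying $\xi$ to this decomposition gives
$$\langle P(\Ad_G^*(X)\xi), Z\rangle = \langle \Ad_{G_0}^*(X)P(\xi), Z\rangle + c(X,Z).$$

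To identify the scalar $c(X,Z)$ with $\langle\chi(X),Z\rangle$, I would specialize the same calculation to the point $\xi_0\in\Oc$. Because $\xi_0$ is the element of the dual basis paired with the central generator $X_0$, its restriction $P(\xi_0)=\xi_0|_{\gg_0}$ vanishes, so the identity above specialized at $\xi_0$ reads $\langle\chi(X),Z\rangle = c(X,Z)$, recalling $\chi(X)=P(\Ad_G^*(X)\xi_0)$. Substituting back yields
$$\langle P(\Ad_G^*(X)\xi), Z\rangle = \langle \Ad_{G_0}^*(X)P(\xi)+\chi(X), Z\rangle = \langle \gamma(X)P(\xi),Z\rangle,$$
as required.

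I do not anticipate any serious obstacle: this proposition is essentially a bookkeeping statement whose content is that the $\zg$-valued discrepancy between $\Ad$ and $\Ad_G$ is precisely what the affine term $\chi$ in $\gamma$ absorbs. As an alternative route, one can instead write $\xi=A_{\xi_0}(Y)$ via the parameterization, use the intertwining property $A_{\xi_0}(X\cdot Y)=\Ad_G^*(X)A_{\xi_0}(Y)$ from Remark~\ref{alg}, and then invoke the 1-cocycle identity $\chi(X\cdot Y)=\chi(X)+\Ad_{G_0}^*(X)\chi(Y)$ together with $P(\xi)=\chi(Y)$; this gives the same conclusion in a more conceptual but less self-contained form.
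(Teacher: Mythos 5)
Your proof is correct and follows essentially the same route as the paper: both rest on the decomposition $\Ad_G(-X)Z=\Ad_{G_0}(-X)Z+(\text{scalar})\,X_0$ relative to $\gg=\zg\dotplus\gg_0$ together with the normalization $\langle\xi,X_0\rangle=1$ on $\Oc$. The only (harmless) cosmetic difference is that the paper identifies the scalar directly as $\langle\xi_0,\Ad_G(-X)Z\rangle=\langle\chi(X),Z\rangle$ via the projection $\pr_{\zg}$, whereas you recover it by specializing the identity at $\xi=\xi_0$ and using $P(\xi_0)=0$.
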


\begin{proof}
For arbitrary $X,Y\in\gg_0$ and $\xi\in\Oc$ we have 
$$\langle P(\Ad_G^*(X)\xi),Y\rangle_{\gg_0^*,\gg_0} 
=\langle \Ad_G^*(X)\xi,Y\rangle
=\langle\xi,\Ad_G(-X)Y\rangle. $$
Moreover, if we denote by $\pr_{\zg}$ and $\pr_{\gg_0}$  the Cartesian projections 
corresponding to the decomposition $\gg=\zg\dotplus\gg_0$, then we have 
$$\begin{aligned}
\Ad_G(-X)Y
&=\pr_{\zg}(\Ad_G(-X)Y)+\pr_{\gg_0}(\Ad_G(-X)Y) \\
&=\langle\xi_0,\Ad_G(-X)Y\rangle X_0+\Ad_{G_0}(-X)Y. 
\end{aligned}$$
By using the preceding equality we then get 
$$\begin{aligned}
\langle P(\Ad_G^*(X)\xi),Y\rangle_{\gg_0^*,\gg_0} 
&=\langle\xi,\Ad_G(-X)Y\rangle \\
&=\langle\xi,\langle\xi_0,\Ad_G(-X)Y\rangle X_0+\Ad_{G_0}(-X)Y\rangle \\
&=\langle\xi_0,\Ad_G(-X)Y\rangle \langle\xi, X_0\rangle+\langle\xi,\Ad_{G_0}(-X)Y\rangle \\
&=\langle\xi_0,\Ad_G(-X)Y\rangle +\langle\xi,\Ad_{G_0}(-X)Y\rangle\\
&=\langle\Ad_G^*(X)\xi_0,Y\rangle +\langle P(\xi),\Ad_{G_0}(-X)Y\rangle_{\gg_0^*,\gg_0}\\
&=\langle\chi(X),Y\rangle_{\gg_0^*,\gg_0} +\langle \Ad_{G_0}^*X(P(\xi)),Y\rangle_{\gg_0^*,\gg_0}
\end{aligned}$$
and this concludes the proof since $Y\in\gg_0$ is arbitrary. 
\end{proof}

\begin{corollary}\label{aff_cor1}
If $D\colon C^\infty(\gg_0^*)\to C^\infty(\gg_0^*)$ is a linear differential operator, 
then we have $D\in\Diff(\gg_0^*)$ 
if and only if for every $X\in\gg_0$ we have $[D,\gamma_{\omega}(X)]=0$. 
\end{corollary}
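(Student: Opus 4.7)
The plan is to reduce the corollary to the classical statement that a differential operator commutes with a complete vector field if and only if it commutes with the corresponding flow, and then to make this rigorous via a transport equation argument.

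First, I would combine Proposition~\ref{aff_prop} with the fact that $P\colon\Oc\to\gg_0^*$ is a diffeomorphism to rephrase $D\in\Diff(\gg_0^*)$ as the invariance property
$$D(f\circ\gamma(X))=(Df)\circ\gamma(X)\quad\text{for all}\;X\in\gg_0\;\text{and}\;f\in C^\infty(\gg_0^*).$$
For fixed $X\in\gg_0$ set $S_X(t)f:=f\circ\gamma(tX)$; since $\{tX\}_{t\in\RR}$ is a one-parameter subgroup of $G_0$ and $\gamma$ is a left action, $(S_X(t))_{t\in\RR}$ is a one-parameter group of linear operators on $C^\infty(\gg_0^*)$. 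A direct computation using the affine formula $\gamma(tX)\eta=\Ad_{G_0}^*(tX)\eta+\chi(tX)$ together with the identity $\chi'_0=\omega^\#$ shows that its infinitesimal generator at $t=0$ is precisely $\gamma_\omega(X)$. The implication $(\Rightarrow)$ is then immediate: differentiate the identity $D\circ S_X(t)=S_X(t)\circ D$ at $t=0$.

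For $(\Leftarrow)$, assume $[D,\gamma_\omega(X)]=0$ for every $X\in\gg_0$ and fix $f\in C^\infty(\gg_0^*)$. I would consider the two smooth families
$$\Psi(t,\eta)=D[f\circ\gamma(tX)](\eta),\qquad \tilde\Psi(t,\eta)=(Df)(\gamma(tX)\eta),$$
which coincide at $t=0$ with $Df$, and check that both satisfy the first-order Cauchy problem
$$\partial_t u=\gamma_\omega(X)\,u,\qquad u(0,\cdot)=Df.$$
For the $\Psi$-side one commutes $\partial_t$ past $D$ and then uses $D\gamma_\omega(X)=\gamma_\omega(X)D$; for the $\tilde\Psi$-side the computation is identical with $Df$ in place of $f$, requiring no hypothesis on $D$. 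Uniqueness of this Cauchy problem then follows by the method of characteristics: the characteristic curves through $(0,\eta_0)$ are $t\mapsto\gamma(-tX)\eta_0$, along which any solution is constant, so the unique solution is $u(t,\eta)=u(0,\gamma(tX)\eta)=\tilde\Psi(t,\eta)$. Hence $\Psi=\tilde\Psi$, and evaluating at $t=1$ with $X$ ranging over $\gg_0$ returns the invariance of the first paragraph.

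The main technical point is the uniqueness assertion for the transport equation on $\gg_0^*$. No difficulty arises here because, by nilpotency of $\gg$, both $\Ad_{G_0}^*$ and $\chi$ are polynomial maps, so $\dot\gamma_\omega(X)$ is a polynomial vector field with globally defined flow $\gamma(tX)$ on $\gg_0^*$, and the method of characteristics applies with no functional-analytic subtlety. The only other verification required is the identification of $\gamma_\omega(X)$ as the generator of $S_X$, which is essentially built into the relation $\chi'_0=\omega^\#$ noted in the remark preceding Definition~\ref{aff}.
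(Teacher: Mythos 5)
Your argument is correct. The reduction in your first paragraph is exactly the paper's use of Proposition~\ref{aff_prop}: since $\gamma(X)=P\circ\Ad_G^*(X)\circ P^{-1}$ and the central direction acts trivially, membership in $\Diff(\gg_0^*)$ is equivalent to commutation with the pullbacks $S_X(t)$. Where you diverge is in the passage from the global invariance to the infinitesimal condition $[D,\gamma_\omega(X)]=0$: the paper disposes of this by invoking Remark~\ref{alg}, i.e.\ it transports the problem to $G_0$ via $A_{\xi_0}$ and cites the standard identification of invariant differential operators with the universal enveloping algebra (Helgason), for which commutation with the group action and with its infinitesimal generators are equivalent. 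You instead prove that equivalence directly on $\gg_0^*$: you identify $\gamma_\omega(X)$ as the generator of the one-parameter group $S_X(t)$ (using $\chi_0'=\omega^\#$ and the cocycle property of $\chi$, which makes $t\mapsto\gamma(tX)$ the globally defined flow of the polynomial vector field $\dot\gamma_\omega(X)$), and then run a uniqueness argument for the transport equation $\partial_t u=\gamma_\omega(X)u$ by the method of characteristics. This is a legitimate, self-contained alternative: it avoids any appeal to the enveloping-algebra machinery and makes the ``invariance under the flow iff commutation with the generator'' step explicit, at the cost of being longer than the paper's two-line citation. All the analytic points you flag (global existence of the flow, interchanging $D$ with $\partial_t$, constancy of solutions along the curves $t\mapsto\gamma(-tX)\eta_0$) are indeed unproblematic here.
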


\begin{proof}
Use Proposition~\ref{aff_prop} and Remark~\ref{alg}. 
\end{proof}

\begin{corollary}\label{aff_cor2}
The set $\Diff(\gg_0^*)$ is equal to the unital associative algebra 
generated by the first-order linear differential operators  $D\colon C^\infty(\gg_0^*)\to C^\infty(\gg_0^*)$ 
satisfying the condition $[D,\gamma_{\omega}(X)]=0$ for arbitrary $X\in\gg_0$. 
\end{corollary}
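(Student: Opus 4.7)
The plan is to combine Corollary~\ref{aff_cor1} with the isomorphism between $\Diff(\gg_0^*)$ and the universal enveloping algebra $\U((\gg_0)_{\CC})$ established in Remark~\ref{alg}, and then invoke the Poincar\'e-Birkhoff-Witt theorem to conclude that a small set of first-order operators already generates everything.

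First, let $\Ac$ denote the unital associative algebra generated, inside the algebra of all linear differential operators on $\gg_0^*$, by those first-order operators $D$ satisfying $[D,\gamma_{\omega}(X)]=0$ for every $X\in\gg_0$. The inclusion $\Ac\subseteq\Diff(\gg_0^*)$ is immediate from Corollary~\ref{aff_cor1}: any such $D$ already belongs to $\Diff(\gg_0^*)$, and $\Diff(\gg_0^*)$ is a unital associative algebra, hence it contains the algebra generated by such operators.

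For the reverse inclusion, I would combine the isomorphism $\U((\gg_0)_{\CC})\xrightarrow{\sim}\Diff(G_0)$, $u\mapsto\de\rho(u)$, from Remark~\ref{alg} with the identifications $\Diff(G_0)\cong\Diff(\Oc)\cong\Diff(\gg_0^*)$ coming, respectively, from pullback by $A_{\xi_0}$ and pushforward through~$P$. This gives an isomorphism of associative algebras $\U((\gg_0)_{\CC})\xrightarrow{\sim}\Diff(\gg_0^*)$ under which each $X_j$ is sent to a first-order differential operator $D_j\in\Diff(\gg_0^*)$ (concretely, $D_j$ is the pushforward through $P$ of $\de\beta(X_j)$, as already used in Remark~\ref{alg}). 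By the PBW theorem, $\U((\gg_0)_{\CC})$ is generated as a unital associative algebra by $X_1,\ldots,X_d$, so $\Diff(\gg_0^*)$ is generated as a unital associative algebra by $D_1,\ldots,D_d$. Each $D_j$ lies in $\Diff(\gg_0^*)$ and is therefore a first-order operator satisfying $[D_j,\gamma_{\omega}(X)]=0$ for all $X\in\gg_0$ by Corollary~\ref{aff_cor1}; hence $D_1,\ldots,D_d\in\Ac$, and therefore $\Diff(\gg_0^*)\subseteq\Ac$.

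There is no serious obstacle: the whole argument essentially repackages Corollary~\ref{aff_cor1} together with the PBW-based description of $\Diff(\gg_0^*)$ from Remark~\ref{alg}. The only mild subtlety worth a sentence in the write-up is to note explicitly that the chain of isomorphisms $\U((\gg_0)_{\CC})\cong\Diff(G_0)\cong\Diff(\Oc)\cong\Diff(\gg_0^*)$ sends each generator $X_j\in\gg_0$ to a genuinely first-order operator on $\gg_0^*$, which is what allows us to apply PBW inside~$\Ac$.
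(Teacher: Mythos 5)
Your proposal is correct and follows essentially the same route as the paper, whose proof is simply the instruction to combine Corollary~\ref{aff_cor1} with Remark~\ref{alg}; you have just written out explicitly the chain of isomorphisms $\U((\gg_0)_{\CC})\cong\Diff(G_0)\cong\Diff(\Oc)\cong\Diff(\gg_0^*)$ and the observation that the generators $X_j$ map to first-order operators. (The appeal to PBW is slightly more than needed, since $\U((\gg_0)_{\CC})$ is generated by $\gg_0$ by definition, but this does not affect the argument.)
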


\begin{proof}
Use Corollary~\ref{aff_cor1} and Remark~\ref{alg}. 
\end{proof}

\begin{example}\label{N5N3}
\normalfont
Let us assume that the Lie algebra $\gg$ is 5-dimensional and is defined by the commutation relations 
$$[X_4,X_3]_{\gg}=X_1,\  [X_4,X_1]_{\gg}=[X_3,X_2]_{\gg}=X_0.$$
Then the Lie algebra $\gg_0=\spa\{X_1,X_2,X_3,X_4\}$ is defined by the commutation relation 
$$[X_4,X_3]=X_1.$$ 
The skew-symmetric bilinear functional $\omega\colon\gg_0\times\gg_0\to\RR$ is 
defined by  
$\omega(X_1,X_4)=\omega(X_2,X_3)=1$ and $\omega(X_i,X_j)=0$ if $1\le i<j\le 4$ and 
$(i,j)\not\in\{(1,4),(2,3)\}$. 
Therefore, by using Remark~\ref{expl}, 
 we get 
$$\begin{aligned}
\gamma_\omega(X_1)&=\partial_4, \\
\gamma_\omega(X_2)&=\partial_3, \\
\gamma_\omega(X_3)&=-\partial_2+\eta_1\partial_4, \\
\gamma_\omega(X_4)&=-\partial_1-\eta_1\partial_3.
\end{aligned} $$
To compute $\Diff(\gg_0^*)$
let $D=\sum\limits_{j=1}^4a_j\partial_j$ be a first-order differential operator with polynomial coefficients 
on $\gg_0^*$ with $[\gamma_\omega(X_k),D]=0$ for $k=1,2,3,4$. 
In order to determine the conditions to be satisfied by the coefficients of $D$, 
we use the following commutation formula for first-order partial differential operators, 
which can be verified directly: 
\begin{equation}\label{N5N3_eq1}
[b_i\partial_i,a_j\partial_j]=b_i(\partial_i a_j)\partial_j-a_j(\partial_j b_i)\partial_i.
\end{equation}
We then  see, by direct computation and determining the coefficients $a_j$,  that the space of first-order linear differential operators 
in $\Diff(\gg_0^*)$ is the 4-dimensional vector space 
$$\{c_1\partial_1+c_2\partial_2+(c_1\eta_1+c_3)\partial_3+(-c_1\eta_2+c_4)\partial_4\mid c_1,c_2,c_3,c_4\in\RR\}$$
that is, 
$\spa\{\partial_2,\partial_3,\partial_4,\partial_1+\eta_1\partial_3-\eta_2\partial_4\}$. 
Then Corollary~\ref{aff_cor2} 
 shows that $\Diff(\gg_0^*)$ is the unital associative algebra of differential operators 
generated by this set. 

We recall from \cite[page 18]{Pe88} that 
the unitary irreducible representation of $G=(\gg,\cdot)$ associated with the flat coadjoint orbit discussed here is 
$\pi\colon G\to\Bc(L^2(\RR^2))$, with the derivative 
$\de\pi\colon\gg\to\Lc(\Sc(\RR^2))$ given by 
\begin{align}
\de\pi(X_0)&= \ie\1\nonumber \\
\de\pi(X_1)&= \ie t_1 \nonumber\\
\de\pi(X_2)&= \ie t_2 \nonumber\\
\de\pi(X_3)&= \partial_{t_2} \nonumber\\
\de\pi(X_4)&= \partial_{t_1}-\ie t_1t_2\nonumber
\end{align}
where $(t_1,t_2)$ is the generic point in $\RR^2$ and the operators $\de\pi(X_1)$ and $\de\pi(X_2)$ 
are the operators of multiplication by the coordinate functions as indicated above. 
\qed
\end{example}

\begin{example}\label{N5N6}
\normalfont
Let us assume that the Lie algebra $\gg$ is again 5-dimensional and is defined this time by the commutation relations 
$$[X_4,X_3]_{\gg}=X_2,\  [X_4,X_2]_{\gg}=X_1,\ [X_4,X_1]_{\gg}=[X_3,X_2]_{\gg}=X_0.$$
Then the Lie algebra $\gg_0=\spa\{X_1,X_2,X_3,X_4\}$ is defined by the commutation relations 
$$[X_4,X_3]=X_2,\ [X_4,X_2]=X_1.$$ 
The skew-symmetric bilinear functional $\omega\colon\gg_0\times\gg_0\to\RR$ is 
defined by  
$\omega(X_1,X_4)=\omega(X_2,X_3)=1$ and $\omega(X_i,X_j)=0$ if $1\le i<j\le 4$ and 
$(i,j)\not\in\{(1,4),(2,3)\}$. 
By using Remark~\ref{expl}, 
 we get 
$$\begin{aligned}
\gamma_\omega(X_1)&=\partial_4, \\
\gamma_\omega(X_2)&=\partial_3+\eta_1\partial_4, \\
\gamma_\omega(X_3)&=-\partial_2+\eta_2\partial_4, \\
\gamma_\omega(X_4)&=-\partial_1-\eta_1\partial_2-\eta_2\partial_3.
\end{aligned} $$
We again wish to compute $\Diff(\gg_0^*)$ by using Corollary~\ref{aff_cor2}. 
So let $D=\sum\limits_{j=1}^4a_j\partial_j$ be a first-order differential operator with polynomial coefficients 
on $\gg_0^*$ such that $[\gamma_\omega(X_k),D]=0$ for $k=1,2,3,4$. 
It  eventually follows, again by a direct computation and solving some simple first order 
differential equations satified by $a_j$,  
that the space of first-order linear differential operators 
in $\Diff(\gg_0^*)$ is the 4-dimensional vector space 
consisting of the operators 
$$\begin{aligned} 
&
c_1\partial_1+(c_1\eta_1+c_2)\partial_2+(\frac{c_1}{2}\eta_1^2+c_2\eta_1+c_3)\partial_3 \\
&
+(c_1\eta_3-c_1\eta_1\eta_2-c_2\eta_2+\frac{c_1}{3}\eta_1^3+\frac{c_2}{2}\eta_1^2+c_4)\partial_4
\end{aligned}$$  
with $c_1,c_2,c_3,c_4\in\RR$,
that is, 
$$\spa\{\partial_3,\partial_4,
\partial_2+\eta_1\partial_3+(-\eta_2+\frac{\eta_1^2}{2})\partial_4, 
\partial_1+\eta_1\partial_2+\frac{\eta_1^2}{2}\partial_3 
+(\eta_3-\eta_1\eta_2+\frac{\eta_1^3}{3})\partial_4\}.$$
Then Corollary~\ref{aff_cor2} 
 shows that $\Diff(\gg_0^*)$ is the unital associative algebra of differential operators 
generated by this set. 

We recall from \cite[page 29]{Pe88} that 
the unitary irreducible representation of $G=(\gg,\cdot)$ associated with the flat coadjoint orbit discussed here is 
$\pi\colon G\to\Bc(L^2(\RR^2))$, with the derivative 
$\de\pi\colon\gg\to\Lc(\Sc(\RR^2))$ given by 
\begin{align}
\de\pi(X_0)&= \ie\1\nonumber \\
\de\pi(X_1)&= \ie t_1 \nonumber\\
\de\pi(X_2)&= \ie t_2 \nonumber\\
\de\pi(X_3)&= \partial_{t_2} \nonumber\\
\de\pi(X_4)&= \partial_{t_1}+t_1\partial_{t_2}-\frac{\ie}{2} t_2^2\nonumber
\end{align}
where, just as in Example~\ref{N5N3}, 
$(t_1,t_2)$ is the generic point in $\RR^2$ and the operators $\de\pi(X_1)$ and $\de\pi(X_2)$ 
are the operators of multiplication by the coordinate functions as indicated above. 
\qed
\end{example}

\section{Application to convolution operators on the Heisenberg group}\label{final}

\subsection{Notation for the Heisenberg group}\hfill


$\bullet$ 
The Heisenberg algebra 
$\hg:=\hg_{2m+1}=\spa\{Z,Y_1,\dots,Y_m,X_1,\dots,X_m\}$, 
with 
\begin{equation}\label{can}
[Y_j,X_j]=Z
\end{equation} 
for $j=1,\dots,m$, and the group $H=(\hg, \cdot) $ given by
$$(\forall X,Y\in\hg)\quad X\cdot Y:=X+Y+\frac{1}{2}[X,Y]. $$

$\bullet$
The dual to the Heisenberg algebra: 

$\hg^*:=\hg_{2m+1}^*=\spa\{Z^*,Y_1^*,\dots,Y_m^*,X_1^*,\dots,X_m^*\}$

$\bullet$
The semidirect product: 

\begin{itemize}

\item[--] Consider $\Fc:=\hg^*+\RR\1\subseteq\Ci(\hg)$, where $\1$ denotes the constant function equal to 1 on $\hg$


\item[--] Consider the action of $H $ in $\Fc$ given by  $\lambda\colon\hg\to\End(\Fc)$, $(\lambda(X)\phi)(Y):=\phi((-X)\cdot Y)$.
Note that for all $\xi\in\hg^*$ and $X\in \hg$ we have 
\begin{equation}\label{deriv}
        \de\lambda(X)\xi=-\langle\xi,X\rangle-\frac{1}{2}\xi\circ(\ad_{\hg}X), 
\end{equation}         
since
$$(\de\lambda(X)\xi)(Y)=\frac{\de}{\de t}\Big\vert_{t=0}\langle\xi,(-tX)\cdot Y\rangle
=\frac{\de}{\de t}\Big\vert_{t=0}\langle\xi,-tX+ Y-\frac{t}{2}[X,Y]\rangle. $$
\item[--]
For the Heisenberg group $H=(\hg,\cdot)$ we can consider the semidirect product 
$G:=\Fc\rtimes_{\lambda}H$, whose Lie algebra is $\gg:=\Fc\rtimes_{\de\lambda}\hg$,  
with the bracket given by 
$$[(\phi_1,X_1),(\phi_2,X_2)]=(\de\lambda(X_1)\phi_2-\de\lambda(X_2)\phi_1,[X_1,X_2]) $$
for $\phi_1,\phi_2\in\Fc=\hg^*+\RR\1$ and $X_1,X_2\in\hg$. 
This implies in particular that $\gg$ is a 3-step nilpotent Lie algebra with the 1-dimensional center  
$\zg:=\RR(\1,0)$, hence this center consists of the constant functions on $\hg$. 
\item[--]
 It follows by the above remarks that if we define $\gg_0:=\gg/\zg$, then we obtain 
the symplectic 2-step nilpotent Lie algebra 
$$\begin{aligned}
\gg_0
:=&\hg^*\dotplus\hg \\
=& \spa\{Z^*,Y_1^*,\dots,Y_m^*,X_1^*,\dots,X_m^*,Z,Y_1,\dots,Y_m,X_1,\dots,X_m\}
\end{aligned}$$
(predual to a flat coadjoint orbit in the 3-step nilpotent Lie group $G$ with 1-dimensional center)
 with the bracket given by 
$$[\xi_1+X_1,\xi_2+X_2]=\frac{1}{2}\xi_1\circ(\ad_{\hg}X_2)-\frac{1}{2}\xi_2\circ(\ad_{\hg}X_1)+[X_1,X_2] $$
for $\xi_1,\xi_2\in\gg^*$ and $X_1,X_2\in\hg$.
\end{itemize}

We note the following commutation relations in the Lie algebra $\gg_0$: 
\begin{align}
\label{commutZX}
 [Z^*,X_j]&=\frac{1}{2}Z^*\circ(\ad_{\hg}X_j)=-\frac{1}{2}Y_j^* \\
 \label{commutZY}
 [Z^*,Y_j]&=\frac{1}{2}Z^*\circ(\ad_{\hg}Y_j)=\frac{1}{2}X_j^*
\end{align}
which follow from \eqref{can} and \eqref{deriv}.


$\bullet$
The generic point in $\gg_0^*=\hg_{2n+1}\dotplus\hg_{2n+1}^*$: 
$$(\zeta,\eta_1,\dots,\eta_m,\xi_1,\dots,\xi_m,\zeta^*,\eta_1^*,\dots,\eta_m^*,\xi_1^*,\dots,\xi_m^*)$$

$\bullet$
The symplectic form on $\gg_0$: 

$\omega\colon\gg_0\times\gg_0\to\RR$, \quad 
$(\forall V_0\in\{Z,Y_1,\dots,Y_m,X_1,\dots,X_m\})\quad \omega(V_0^*,V_0)=1$

\begin{lemma}
A basis of
fundamental vector fields generated by the affine coadjoint action of $G_0$ are given by
\begin{align}
\label{Xstar}
\gamma_\omega(X_k^*) & =\partial_{\xi_k} \\
\label{Ystar}
\gamma_\omega(Y_k^*) & =\partial_{\eta_k} \\
\label{Zstar}
\gamma_\omega(Z^*) & =\partial_{\zeta}
-\frac{1}{2}(\sum_{k=1}^m\xi_k^*\partial_{\eta_k}-\sum_{k=1}^m\eta_k^*\partial_{\xi_k}) \\
\label{X}
\gamma_\omega(X_k)& 
=-\partial_{\xi_k^*}-(\frac{\eta_k^*}{2}\partial_{\zeta^*}-\zeta\partial_{\eta_k})\\
\label{Y}
\gamma_\omega(Y_k)&=-\partial_{\eta_k^*}-(-\frac{\xi_k^*}{2}\partial_{\zeta^*}+\zeta\partial_{\xi_k}) \\
\label{Z}
\gamma_\omega(Z) &=-\partial_{\zeta^*} 
\end{align}
for $j=1,\dots,m$.
\end{lemma}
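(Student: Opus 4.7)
The plan is to apply the general formula for fundamental vector fields obtained in Remark~\ref{expl}, namely
$$\gamma_\omega(X)=\sum_V \omega(X,V)\,\partial_V-\sum_V \langle\eta,[X,V]\rangle_{\gg_0^*,\gg_0}\,\partial_V,$$
where $V$ runs over the basis $\{Z^*,Y_j^*,X_j^*,Z,Y_j,X_j\}_{j=1}^m$ of $\gg_0$ and $\partial_V$ is the partial derivative in the dual coordinate (so $\partial_Z=\partial_\zeta$, $\partial_{Z^*}=\partial_{\zeta^*}$, and similarly for the others). Two ingredients are needed: the values $\omega(X,V)$ and the brackets $[X,V]$.

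For $\omega$, by definition $\omega(V_0^*,V_0)=1$ for $V_0\in\{Z,Y_j,X_j\}$ and $\omega$ vanishes on all other pairs of basis vectors (including pairs of the form $V^*,W^*$ or $V,W$), so the first sum in the formula is always a single term, $\pm\partial_V$ for one specific $V$. For the brackets, the bracket formula of the semidirect product reduces to three blocks: inside $\hg$ one has only $[Y_j,X_j]=Z$; inside $\hg^*$ the bracket is identically zero; and between $\hg^*$ and $\hg$ one computes from \eqref{deriv} that $[\phi,V]=\tfrac12\phi\circ\ad_{\hg}V$. Since the only nontrivial brackets in $\hg$ land in $\RR Z$, this vanishes whenever $\phi\in\{Y_j^*,X_j^*\}$, and one is left with the already‐recorded identities \eqref{commutZX}, \eqref{commutZY} as the only additional nonzero brackets. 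In summary, the complete list of nonzero brackets among basis vectors is
$$[Y_j,X_j]=Z,\qquad [Z^*,Y_j]=\tfrac12 X_j^*,\qquad [Z^*,X_j]=-\tfrac12 Y_j^*.$$

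Substituting these data into the formula gives each line of the lemma. For $X\in\{X_k^*,Y_k^*\}$ the second sum vanishes outright (these vectors have trivial bracket with everything) and the first sum contributes $\partial_{\xi_k}$ or $\partial_{\eta_k}$, yielding \eqref{Xstar}--\eqref{Ystar}. For $X=Z^*$, the first sum gives $\partial_\zeta$ while the second collects contributions from $V=Y_j$ and $V=X_j$ producing $-\tfrac12\sum_j\xi_j^*\partial_{\eta_j}+\tfrac12\sum_j\eta_j^*\partial_{\xi_j}$, hence \eqref{Zstar}. For $X=X_k$ the only contribution to the first sum comes from $V=X_k^*$ (with coefficient $\omega(X_k,X_k^*)=-1$ by skew-symmetry), while the second sum picks up $V=Y_k$ via $[X_k,Y_k]=-Z$ and $V=Z^*$ via $[X_k,Z^*]=\tfrac12 Y_k^*$; this yields \eqref{X}. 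The cases \eqref{Y}, \eqref{Z} are analogous.

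The main point to watch is the bookkeeping of signs, which can easily flip in three places: the skew-symmetry of $\omega$ when $X$ is one of $X_k,Y_k,Z$ rather than a starred vector, the asymmetry between $[X,V]$ and $[V,X]$ when invoking \eqref{commutZX}--\eqref{commutZY}, and the factor $\tfrac12$ coming from the $\hg$-action on $\hg^*$. Beyond that the proof is a purely mechanical verification of six formulas from the general recipe of Remark~\ref{expl}.
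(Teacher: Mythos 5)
Your proof is correct and follows exactly the paper's own argument: both specialize the general formula for $\gamma_\omega(X)$ from Remark~\ref{expl} to the basis of $\gg_0$, using the values of $\omega$ together with the commutation relations \eqref{can} and \eqref{commutZX}--\eqref{commutZY}. You merely make explicit the bookkeeping (the complete list of nonzero brackets, the single nonvanishing term of the $\omega$-sum, and the sign conventions) that the paper leaves to the reader.
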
 
\begin{proof}
Recall from Remark~\ref{expl} that 
if $V_1,\dots,V_d$ is a Jordan-H\"older basis in a symplectic nilpotent Lie algebra $(\gg,\omega)$ 
and $X\in\gg$ with
$[X,V_j]-\sum\limits_{k=1}^{j-1}c_{jk}(X)V_k=0$ for $j=1,\dots,m$, 
then 
the corresponding infinitesimal generator of the affine coadjoint action is 
$$\gamma_\omega(X)=\sum_{j=1}^d(\omega(X,V_j)-\sum_{k=1}^{j-1}c_{jk}(X)v_k)\partial_{v_j},$$
where $(v_1,\dots,v_d)$ is the generic point in $\gg^*$ with the coordinates computed with respect to the 
dual basis $V_1^*,\dots,V_d^*$. 
Then the formulas \eqref{Xstar}--\eqref{Z} follow by specializing 
the above formula and using \eqref{can} and \eqref{commutZX}--\eqref{commutZY}. 
\end{proof}

\subsection{The algebra of invariant differential operators on $\gg_0^*$}

\begin{proposition}\label{conclusion}
The unital associative algebra $\Diff(\gg_0^*)$ is generated by 
the set of first-order linear partial differential operators 
$$\begin{aligned}
 \{\partial_{\zeta^*}, & \partial_\zeta+\sum_{j=1}^m\eta_j^*\partial_{\xi_j}-\sum_{j=1}^d\xi_j^*\partial_{\eta_j}\} \\
&\cup\{\partial_{\xi_k},\partial_{\eta_k},
\partial_{\xi_k^*}-\frac{1}{2}(\zeta\partial_{\eta_k}+\eta_k^*\partial_{\zeta^*}), 
\partial_{\eta_k^*}+\frac{1}{2}(\zeta\partial_{\xi_k}+\xi_k^*\partial_{\zeta^*}) 
\mid 
k=1,\dots,m\}.
\end{aligned}$$
\end{proposition}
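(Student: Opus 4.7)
The plan is to apply Corollary~\ref{aff_cor2} and thereby reduce the claim to an identification of the space of first-order linear differential operators $D$ on $\gg_0^*$ satisfying $[\gamma_\omega(V),D]=0$ for every $V\in\gg_0$. Once we know (i) that this space of first-order invariants has the expected dimension, and (ii) that each of the operators listed in the statement does lie in it and that the collection is linearly independent, the conclusion follows: a set of $\dim\gg_0$ linearly independent first-order invariants must generate the full algebra of invariant operators, which by Remark~\ref{alg} (transported to $\gg_0^*$ through $P$) is $\U((\gg_0)_\CC)$.

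For the dimension count, recall from the discussion after Definition~\ref{axi0} that $A_{\xi_0}\colon G_0\to\Oc$ is a polynomial diffeomorphism, and by Proposition~\ref{aff_prop} the chart $P\colon\Oc\to\gg_0^*$ intertwines the coadjoint action of $G_0$ with the affine coadjoint action $\gamma$. Therefore $\gamma$ is simply transitive on $\gg_0^*$, and the fundamental fields $\{\gamma_\omega(V)\}_{V\in\gg_0}$ form a parallelism, being linearly independent at every point. Consequently an invariant vector field on $\gg_0^*$ is determined uniquely by its value at a single point, so the space of such vector fields has dimension exactly $\dim\gg_0=4m+2$. In addition, the zeroth-order term of any first-order invariant operator must be a function annihilated by all $\gamma_\omega(V)$, and so, by transitivity, constant; this extra dimension contributes only scalar multiples of the identity, already present in any unital algebra.

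The second step is a direct verification that each of the $4m+2$ listed operators $D$ satisfies $[\gamma_\omega(V),D]=0$ for all $V$ in the basis $\{Z^*,Y_j^*,X_j^*,Z,Y_j,X_j\}_{j=1,\dots,m}$, using formula \eqref{N5N3_eq1} together with the explicit expressions \eqref{Xstar}--\eqref{Z}. Most pairs commute trivially, since the coordinates occurring in the coefficients are disjoint; the delicate cases are those in which the cross terms produced by the brackets \eqref{commutZX}--\eqref{commutZY} must cancel. For instance, for $D=\partial_{\xi_k^*}-\tfrac12\zeta\partial_{\eta_k}-\tfrac12\eta_k^*\partial_{\zeta^*}$ and $V=Z^*$, the contribution $[\partial_\zeta,-\tfrac12\zeta\partial_{\eta_k}]=-\tfrac12\partial_{\eta_k}$ is exactly cancelled by $[-\tfrac12\xi_j^*\partial_{\eta_j},\partial_{\xi_k^*}]=\tfrac12\partial_{\eta_k}$, so the full commutator vanishes. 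An analogous cancellation with the roles of $X_k$ and $Y_k$ interchanged works for $\partial_{\eta_k^*}+\tfrac12(\zeta\partial_{\xi_k}+\xi_k^*\partial_{\zeta^*})$, while the remaining operators and pairings are even simpler to check.

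Finally, the leading-order parts $\partial_{\zeta^*}$, $\partial_\zeta$, $\partial_{\xi_k}$, $\partial_{\eta_k}$, $\partial_{\xi_k^*}$, $\partial_{\eta_k^*}$ are pairwise distinct, so the $4m+2$ listed operators are linearly independent; by the previous paragraphs they therefore form a basis of the space of invariant first-order vector fields, and together with $\mathrm{id}$ they span the whole space of first-order invariants. Corollary~\ref{aff_cor2} then yields the claim. The principal obstacle is the bookkeeping in the second step: the $\tfrac12$-factors and signs in the stated generators are precisely what is needed to balance the contributions coming from the nontrivial Heisenberg-type brackets \eqref{commutZX} and \eqref{commutZY}, and the structural reason why these particular combinations are invariant is that they are the right-invariant vector fields on $G_0$ read through the diffeomorphism $\chi=P\circ A_{\xi_0}$.
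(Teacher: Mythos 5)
Your argument is correct, but it reaches the conclusion by a genuinely different route than the paper. The paper's proof is a direct "forward" computation: it writes down a general first-order operator $D=\sum a_j\partial_{\xi_j}+\dots$ with unknown coefficients, imposes $[\gamma_\omega(V),D]=0$ for each basis vector $V$ in turn, and solves the resulting system of first-order PDEs for the coefficients, arriving at exactly the listed span. You instead bound the space of first-order invariants a priori: since $A_{\xi_0}$ is a diffeomorphism and $P$ intertwines the actions (Proposition~\ref{aff_prop}), the affine action $\gamma$ is simply transitive, so an invariant vector field is determined by its value at one point (dimension at most $4m+2$) and the zeroth-order part of an invariant operator must be constant; you then only have to \emph{verify} that the $4m+2$ listed operators commute with all $\gamma_\omega(V)$ and are linearly independent (clear from their constant-coefficient parts). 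Both arguments are sound — I checked your sample cancellation for $[\gamma_\omega(Z^*),\partial_{\xi_k^*}-\tfrac12\zeta\partial_{\eta_k}-\tfrac12\eta_k^*\partial_{\zeta^*}]$ and several of the other nontrivial pairings, and they all vanish as claimed. What your approach buys is a conceptual explanation of why the answer is a $(4m+2)$-dimensional space of vector fields (it must be $\gg_0$ itself under the isomorphism of Remark~\ref{alg}), at the cost of having to guess the generators in advance; the paper's computation derives them without guessing. Two small points: your dimension count really only yields the upper bound $\le 4m+2$ directly (equality then follows from exhibiting the independent invariants, so nothing is lost); and in your closing aside the listed operators correspond, under $\chi$ and the paper's conventions in Remark~\ref{alg}, to the \emph{left}-invariant vector fields on $G_0$ (the image of $\de\rho$), while it is the fundamental fields $\gamma_\omega(V)$ that generate the left translations — you have the left/right terminology reversed relative to the paper, though this does not affect the proof.
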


\begin{proof}
The proof consists of several steps. 
\begin{enumerate}
\item Until the next-to-last step we fix a first-order linear partial differential operator with real smooth coefficients on~$\gg_0^*$:   
$$D=\sum_{j=1}^m a_j\partial_{\xi_j}+ \sum_{j=1}^m b_j\partial_{\eta_j} +c\partial_\zeta 
+c^*\partial_{\zeta^*}+ \sum_{j=1}^m b_j^*\partial_{\eta_j^*}+\sum_{j=1}^m a_j^*\partial_{\xi_j^*}$$
It follows by \eqref{Xstar}--\eqref{Ystar} and \eqref{Z} 
that the conditions 
$$[\gamma_\omega(X_k^*),D]=[\gamma_\omega(Y_k^*),D]=[\gamma_\omega(Z),D]=0$$ 
for $k=1,\dots,m$  
are equivalent to the fact that the coefficients of $D$ depend only on 
$\zeta,\eta_1^*,\dots,\eta_m^*,\xi_1^*,\dots,\xi_m^*$. 

\item It follows by \eqref{X} that 
$$\begin{aligned}
0=&[\gamma_\omega(X_k),D] \\
 =&[-\partial_{\xi_k^*}-\frac{\eta_k^*}{2}\partial_{\zeta^*}+\zeta\partial_{\eta_k},D] \\
 =&-\sum_{j=1}^m(\partial_{\xi_k^*}a_j)\partial_{\xi_j} -\sum_{j=1}^m (\partial_{\xi_k^*}b_j)\partial_{\eta_j} 
  -(\partial_{\xi_k^*}c)\partial_\zeta \\
  &-(\partial_{\xi_k^*}c^*)\partial_{\zeta^*} -\sum_{j=1}^m (\partial_{\xi_k^*}b_j^*)\partial_{\eta_j^*}
   -\sum_{j=1}^m (\partial_{\xi_k^*}a_j^*)\partial_{\xi_j^*} \\
  &+\frac{b_k^*}{2}\partial_{\zeta^*}-c\partial_{\eta_k}
\end{aligned} $$
and this implies that 
\begin{align}
\label{X1}
& a_j,c,b_j^*,a_j^*\text{ depend only on }\zeta,\eta_1^*,\dots,\eta_m^*\\
\label{X2}
& b_j\text{ is also independent on }\xi_k^*\text{ if }j\ne k, \\
\label{X3}
& \partial_{\xi_k^*}b_k=-c, \\
\label{X4}
& \partial_{\xi_k^*}c^*=\frac{b_k^*}{2}.
\end{align}

\item It follows by \eqref{Y} that 
$$\begin{aligned}
0=&[\gamma_\omega(Y_k),D] \\
 =&[-\partial_{\eta_k^*}+\frac{\xi_k^*}{2}\partial_{\zeta^*}+\zeta\partial_{\xi_k},D] \\
 =&-\sum_{j=1}^m(\partial_{\eta_k^*}a_j)\partial_{\xi_j} -\sum_{j=1}^m (\partial_{\eta_k^*}b_j)\partial_{\eta_j} 
  -(\partial_{\eta_k^*}c)\partial_\zeta \\
  &-(\partial_{\eta_k^*}c^*)\partial_{\zeta^*} -\sum_{j=1}^m (\partial_{\eta_k^*}b_j^*)\partial_{\eta_j^*}
   -\sum_{j=1}^m (\partial_{\eta_k^*}a_j^*)\partial_{\xi_j^*} \\
  &-\frac{a_k^*}{2}\partial_{\zeta^*}+c\partial_{\xi_k}
\end{aligned} $$
and this implies that 
\begin{align}
\label{Y1}
& c,b_j^*,a_j^*\text{ depend only on }\zeta \\
\label{Y2}
& b_k\text{ depends only on }\zeta,\xi_k^*\\
\label{Y3}
& a_j\text{ is also independent on }\eta_k^*\text{ if }j\ne k, \text{ so }a_j\text{ depemds only on }\zeta,\eta_j^*,\\
\label{Y4}
& \partial_{\eta_k^*}a_k=c, \\
\label{Y5}
& \partial_{\eta_k^*}c^*=-\frac{a_k^*}{2}.
\end{align}

\item It follows by \eqref{Zstar} that 
$$\begin{aligned}
0=&[\gamma_\omega(Z^*),D] \\
 =&[\partial_{\zeta}
-\frac{1}{2}(\sum_{k=1}^m\xi_k^*\partial_{\eta_k}-\sum_{k=1}^m\eta_k^*\partial_{\xi_k}),D] \\
 =&\sum_{j=1}^m(\partial_\zeta a_j)\partial_{\xi_j} +\sum_{j=1}^m (\partial_\zeta b_j)\partial_{\eta_j} 
  +(\partial_\zeta c)\partial_\zeta \\
  &+(\partial_\zeta c^*)\partial_{\zeta^*} +\sum_{j=1}^m (\partial_\zeta b_j^*)\partial_{\eta_j^*}
   +\sum_{j=1}^m (\partial_\zeta a_j^*)\partial_{\xi_j^*} \\
  &+\sum_{k=1}^m\frac{a_k^*}{2}\partial_{\eta_k}-\sum_{k=1}^m\frac{b_k^*}{2}\partial_{\xi_k}
\end{aligned} $$
and this implies that 
\begin{align}
\label{Zstar1}
& c,b_j^*,a_j^*\text{ are constant}, \\
\label{Zstar2}
& c^*\text{ depends only on }\xi_1^*,\dots,\xi_m^*,\eta_1^*,\dots,\eta_m^*,\\
\label{Zstar3}
& \partial_\zeta a_j=\frac{b_j^*}{2}, \\
\label{Zstar4}
& \partial_\zeta b_j=-\frac{a_j^*}{2}.
\end{align}
\item 
It follows by \eqref{Y2}, \eqref{X3}, \eqref{Zstar1}, and \eqref{Zstar4} that 
there exists $b_{j0}\in\RR$ for which 
$$b_j=-c\xi_j^*-\frac{a_j^*}{2}\zeta+b_{j0}.$$
It follows by \eqref{Y3}, \eqref{Y4}, \eqref{Zstar1}, and \eqref{Zstar3} that 
there exists $a_{j0}\in\RR$ for which 
$$a_j=c\eta_j^*+\frac{b_j^*}{2}\zeta+a_{j0}.$$
It follows by \eqref{Zstar2}, \eqref{X4}, \eqref{Y5}, and \eqref{Zstar1} that 
there exists $c^*_0\in\RR$ for which 
$$c^*=\sum_{j=1}^m\frac{b_j^*}{2}\xi_j^*-\sum_{j=1}^m\frac{a_j^*}{2}\eta_j^*+c^*_0.$$
Therefore 
\begin{align}
D
=&\sum_{j=1}^m (c\eta_j^*+\frac{b_j^*}{2}\zeta+a_{j0})\partial_{\xi_j}
+ \sum_{j=1}^m (-c\xi_j^*-\frac{a_j^*}{2}\zeta+b_{j0})\partial_{\eta_j} 
+c\partial_\zeta \nonumber\\
&+(\sum_{j=1}^m\frac{b_j^*}{2}\xi_j^*-\sum_{j=1}^m\frac{a_j^*}{2}\eta_j^*+c^*_0)\partial_{\zeta^*}
+ \sum_{j=1}^m b_j^*\partial_{\eta_j^*}+\sum_{j=1}^m a_j^*\partial_{\xi_j^*} \nonumber
\end{align}
where $a_{j0},b_{j0},c,a_j^*,b_j^*,c^*\in\RR$. 
This shows that $D$ is a linear combination of the first-order linear partial differential operators in the set 
indicated in the statement, 
and it then follows by Corollary~\ref{aff_cor2} that that set  
generates the unital associative algebra~$\Diff(\gg_0^*)$. 
\end{enumerate}
\end{proof}

\subsection{Convolution operators on the Heisenberg group}

We recall from \cite{BB09} that the mapping 
$$\pi\colon G\to \Bc(L^2(H)),\quad 
\pi(\phi,X)f=\ee^{\ie\phi}\lambda(X)f$$
is a unitary irreducible representation of the connected, simply connected, 3-step nilpotent Lie group $G$ 
and the corresponding coadjoint orbit is 
$$\Oc=\{1\}\times\hg\times\hg^*\simeq\{1\}\times(\hg)^{**}\times\hg^*\subseteq\Fc^*\times\hg^*\simeq\gg^*. $$
We will perform the canonical identification 
$\Oc\longleftrightarrow\hg\times\hg^*$, $(X,\xi,1)\mapsto(X,\xi)$, 
and then the Weyl-Pedersen calculus for the representation $\pi$ 
can be defined by using the predual $\gg_0=\hg^*\dotplus\hg\subset(\RR\1+\hg^*)\rtimes_{\de\lambda}\hg=\gg$. 
We thus obtain   
$$\Op\colon\Sc'(\hg\times\hg^*)\to\Lc(\Sc(\hg),\Sc'(\hg))$$
and for $a\in \Sc(\hg\times\hg^*)$ we have $\widehat{a}\in\Sc(\hg^*\times\hg)$ and 
$$\Op(a)f=\iint\limits_{\hg^*\times\hg}\widehat{a}(\xi,X)\pi(\xi,X)f\de(\xi,X)
=\int\limits_{\hg}\int\limits_{\hg^*}\widehat{a}(\xi,X)\ee^{\ie\langle\xi,\cdot+\frac{1}{2}[X,\cdot]\rangle}\lambda(X)f\de\xi\de X$$
for every $f\in L^2(\hg)$. 
This implies that if $a\in \Sc'(\hg^*)$ and we think of it as a symbol in $\Sc'(\hg\times\hg^*)$ 
(that is, we will define $\Op(a):=\Op(\1\otimes a)$ where $\1$ stands for the constant function equal to 1 on $\hg$, 
as in Corollary~\ref{symbols})
and if we denote by $\widehat{a}\in \Sc'(\hg)$ the Fourier transform of $a$, then 
$$(\Op(a)f)(Y)=\int\limits_{\hg}\widehat{a}(X) f((-X)\cdot Y)\de X$$
for every $X\in\hg$ and $f\in\Sc(\hg)$ (see also \cite[Ex. 2.15(3)]{BB10}). 

Now let $\Ac(\hg^*)$ be the unital associative algebra 
generated by 
the set of first-order linear partial differential operators 
$$
 \{\partial_{\zeta^*}\} \cup\{
\partial_{\xi_k^*}-\frac{1}{2}\eta_k^*\partial_{\zeta^*}, 
\partial_{\eta_k^*}+\frac{1}{2}\xi_k^*\partial_{\zeta^*}
\mid 
k=1,\dots,m\}$$
and define 
$$\Ci_b(\hg^*):=\{a\in\Ci(\hg^*)\mid (\forall D\in\Ac(\hg^*))\quad Da\in L^\infty(\hg^*)\}.$$
As before, this is a Fr\'echet space with respect to the natural topology defined by the seminorms 
$a\mapsto\Vert Da\Vert_{L^\infty(\hg^*)}$ for $D\in\Ac(\hg^*)$.

\begin{corollary}
Then the following assertions are equivalent for $a\in C^\infty(\hg^*)$:
\begin{enumerate}
\item  $a\in \Ci_b(\hg^*)$,
\item For every differential operator $D\in\Ac(\hg^*)$ the corresponding convolution operator satisfies $\Op(Da)\in\BB(L^2(\hg))$. 
\end{enumerate}
Moreover we thus obtain a continuous linear map 
$\Op\colon \Ci_b(\hg^*)\to\BB(L^2(\hg))$. 
\end{corollary}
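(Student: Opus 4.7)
The plan is to deduce this corollary as a direct specialization of Theorem~\ref{main_introd} to the 3-step nilpotent group $G=\Fc\rtimes_\lambda H$, restricted to symbols in $\Sc'(\Oc)$ that depend only on the $\hg^*$-coordinates. Under the canonical identification $\Oc\simeq\hg\times\hg^*$ and the embedding $C^\infty(\hg^*)\hookrightarrow C^\infty(\Oc)$, $a\mapsto\1\otimes a$, the formula $(\Op(\1\otimes a)f)(Y)=\int_{\hg}\widehat{a}(X)f((-X)\cdot Y)\de X$ recalled just before the statement identifies $\Op$ on such symbols with the left convolution operator $f\mapsto\widehat{a}\ast f$ on $L^2(\hg)$. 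Thus the statement of the corollary is exactly the assertion of Theorem~\ref{main_introd} for this subclass of symbols, provided one knows which invariant differential operators from $\Diff(\Oc)$ act nontrivially on $\1\otimes a$ and how they do so.

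The central step is then a purely algebraic reduction: for every $a\in C^\infty(\hg^*)$ and every $D\in\Diff(\gg_0^*)$, I intend to show that $D(\1\otimes a)$ is again of the form $\1\otimes D'a$ for some $D'\in\Ac(\hg^*)$, and that the assignment $D\mapsto D'$ is surjective onto $\Ac(\hg^*)$. Using the explicit list of generators of $\Diff(\gg_0^*)$ furnished by Proposition~\ref{conclusion}, the verification is mechanical: the generators $\partial_\zeta+\sum_j(\eta_j^*\partial_{\xi_j}-\xi_j^*\partial_{\eta_j})$ and $\partial_{\xi_k},\partial_{\eta_k}$ annihilate $\1\otimes a$, while $\partial_{\zeta^*}$ acts as itself and the remaining generators $\partial_{\xi_k^*}-\tfrac12(\zeta\partial_{\eta_k}+\eta_k^*\partial_{\zeta^*})$ and $\partial_{\eta_k^*}+\tfrac12(\zeta\partial_{\xi_k}+\xi_k^*\partial_{\zeta^*})$ restrict, on $\hg^*$-only functions, to precisely $\partial_{\xi_k^*}-\tfrac12\eta_k^*\partial_{\zeta^*}$ and $\partial_{\eta_k^*}+\tfrac12\xi_k^*\partial_{\zeta^*}$, the $\zeta$-coefficients being killed by the accompanying $\partial_{\xi_k}$, $\partial_{\eta_k}$. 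Since every reduced operator again has coefficients involving only $\xi_k^*,\eta_k^*$, iteration preserves the property of depending only on $(\zeta^*,\xi^*,\eta^*)$, so the reduction is compatible with composition and sends the algebra $\Diff(\gg_0^*)$ onto the algebra $\Ac(\hg^*)$.

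Granted this reduction, $\1\otimes a\in\Cc^\infty_b(\Oc)$ is equivalent to $Da\in L^\infty(\hg^*)$ for every $D\in\Ac(\hg^*)$, i.e.\ $a\in\Ci_b(\hg^*)$, and the boundedness of $\Op(D(\1\otimes a))$ for all $D\in\Diff(\Oc)$ is equivalent to the boundedness of $\Op(Da)$ for all $D\in\Ac(\hg^*)$. The equivalence of (1) and (2), together with the continuity of $\Op\colon\Ci_b(\hg^*)\to\BB(L^2(\hg))$, then follows immediately from Theorem~\ref{main_introd} (equivalently, from Theorems~\ref{mainthm1} and~\ref{mainthm2}). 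The main obstacle is the second paragraph's algebraic step, but it reduces to the bookkeeping of coefficients already laid out in Proposition~\ref{conclusion}; once it is in place, the rest is a direct invocation of the main theorem.
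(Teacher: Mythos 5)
Your proof is correct and follows essentially the same route as the paper: the paper's own argument is precisely the observation (drawn from Proposition~\ref{conclusion}) that $\Ac(\hg^*)$ is the restriction of $\Diff(\gg_0^*)$ to functions on $\gg_0^*=\hg\dotplus\hg^*$ depending only on the $\hg^*$-variable, followed by an appeal to Theorem~\ref{main_introd}. Your second paragraph merely spells out the generator-by-generator bookkeeping that the paper leaves implicit, and it checks out.
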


\begin{proof}
It follows by Proposition~\ref{conclusion} that the algebra $\Ac(\hg^*)$ is precisely the restriction of $\Diff(\gg_0^*)$ to the space of functions on $\gg_0^*=\hg\dotplus\hg^*$ that depend only on the variable in~$\hg^*$. 
Then the conclusion follows by Theorem~\ref{main_introd}. 
\end{proof}

\subsection*{Acknowledgments}
We wish to thank Prof. Bent \O rsted for help with a reference.
This research has been partially supported by the 
Grant of the Romanian National Authority for Scientific Research, CNCS-UEFISCDI, project number PN-II-ID-PCE-2011-3-0131.

\end{document}